\newtheorem{thm}{Theorem}[section]
\newtheorem{cor}[thm]{Corollary}
\newtheorem{lem}[thm]{Lemma}
\newtheorem{rmk}[thm]{Remark}
\newcommand{\R}{{\mathbb{R}}}
\newcommand{\3}{\varepsilon}
\newcommand{\4}{\widetilde}
\def\ni{\noindent}
\begin{document}
\title{Some properties of the Yamabe soliton and the related nonlinear elliptic 
equation} 
\author{Shu-Yu Hsu\\
Department of Mathematics\\
National Chung Cheng University\\
168 University Road, Min-Hsiung\\
Chia-Yi 621, Taiwan, R.O.C.\\
e-mail: syhsu@math.ccu.edu.tw}
\date{Nov 3, 2012}
\smallbreak \maketitle
\begin{abstract}
We will prove the non-existence of positive radially symmetric solution of the 
nonlinear elliptic equation $\frac{n-1}{m}\Delta v^m+\alpha v+\beta x\cdot
\nabla u=0$ in $\R^n$ when $n\ge 3$, $0<m\le\frac{n-2}{n}$, $\alpha<0$ and 
$\beta\le 0$. Let $n\ge 3$ and $g=v^{\frac{4}{n+2}}dx^2$ be a metric on $\R^n$ 
where $v$ is a radially symmetric solution of the above elliptic 
equation in $\R^n$ with $m=\frac{n-2}{n+2}$, $\alpha=\frac{2\beta+\rho}{1-m}$ 
and $\rho\in\R$. For $n\ge 3$, $m=\frac{n-2}{n+2}$, we will prove that 
$\lim_{r\to\infty}r^2v^{1-m}(r)=\frac{(n-1)(n-2)}{\rho}$ if 
$\beta>\frac{\rho}{n-2}>0$, 
the scalar curvature $R(r)\to\rho$ as $r\to\infty$ if either 
$\beta>\frac{\rho}{n-2}>0$ or $\rho=0$ and $\alpha>0$ holds, and 
$\lim_{r\to\infty}R(r)=0$ if $\rho<0$ and $\alpha>0$. We give a simple 
different proof of a result of P.~Daskalopoulos and N.~Sesum \cite{DS2} 
on the positivity of the sectional curvature of rotational symmetric 
Yamabe solitons  $g=v^{\frac{4}{n+2}}dx^2$ with $v$ satisfying the above equation
with $m=\frac{n-2}{n+2}$. We will also find the exact value of the sectional 
curvature of such Yamabe solitons at the origin and at infinity. 
\end{abstract}

\vskip 0.2truein

Key words: non-existence, nonlinear elliptic equation, asymptotic behaviour, 
scalar curvature, sectional curvature, Yamabe soliton

AMS Mathematics Subject Classification: Primary 35J70, 35A01 
Secondary 35B40, 58J37, 58J05

\vskip 0.2truein
\setcounter{section}{0}

\section{Introduction}
\setcounter{equation}{0}
\setcounter{thm}{0}

In this paper we will study various properties of the solutions of the following
nonlinear degenerate elliptic equation,
\begin{equation}\label{elliptic-eqn}
\frac{n-1}{m}\Delta v^m+\alpha v+\beta x\cdot\nabla v=0,\quad v>0,\quad
\mbox{ in }\R^n
\end{equation}
where
\begin{equation}\label{m-range}
0<m\le\frac{n-2}{n}\quad\mbox{ and }\quad n\ge 3.
\end{equation}
It is proved in \cite{DS2} by P.~Daskalopoulos and N.~Sesum that if a metric 
$g$ is a complete locally conformally flat Yamabe gradient soliton with 
positive sectional curvature, then $g=v^{\frac{4}{n+2}}dx^2$ 
where $v$ is a radially symmetric solution of \eqref{elliptic-eqn} in 
$\R^n$ with $m=\frac{n-2}{n+2}$ for some $\beta\ge 0$ and $\alpha
=\frac{2\beta+\rho}{1-m}$ where $\rho>0$, $\rho=0$
or $\rho<0$ depending on whether $g$ is a Yamabe shrinking, steady or expanding
soliton. A similar result on the rotational symmetry of complete, noncompact,
gradient Yamabe solitons with positive Ricci curvature without assuming 
the local conformally flatness of the solitons is also proved recently
by G.~Catino, C.~Mantegazza, and L.~Mazzieri \cite{CMM}. It is also 
proved in \cite{DS2} that if $v$ is a radially symmetric 
solution of \eqref{elliptic-eqn} in $\R^n$ with $m=\frac{n-2}{n+2}$ for some 
$\beta\ge 0$ and $\alpha=\frac{2\beta+\rho}{1-m}>0$, then the metric 
$g=v^{\frac{4}{n+2}}dx^2$ is a Yamabe gradient shrinking, steady or expanding 
soliton on $\R^n$ depending on whether $\rho>0$, $\rho=0$ or $\rho<0$.

On the other hand suppose $v$ is a solution of \eqref{elliptic-eqn} with 
$0<m<1$ 
and $n\ge 3$. Then as observed by B.H.~Gilding and L.A.~Peletier \cite{GP}, 
P.~Daskalopoulos and N.~Sesum \cite{DS1}, \cite{DS2}, M.~del Pino and 
M.~S\'aez \cite{PS}, J.L.~Vazquez \cite{V1}, \cite{V2}, and others, 
the function
\begin{equation*}
u_1(x,t)=t^{-\alpha}v(xt^{-\beta})
\end{equation*}
is a solution of 
\begin{equation}\label{diffusion-eqn}
u_t=\frac{n-1}{m}\Delta u^m\quad\mbox{ in }\R^n\times (0,T)
\end{equation}
if 
\begin{equation*}\label{forward}
\alpha=\frac{2\beta-1}{1-m}
\end{equation*}
and for any $T>0$ the function 
\begin{equation*}
u_2(x,t)=(T-t)^{\alpha}v(x(T-t)^{\beta})
\end{equation*}
is a solution of \eqref{diffusion-eqn} in $\R^n\times (0,T)$ if 
\begin{equation*}\label{backward}
\alpha=\frac{2\beta+1}{1-m}>0
\end{equation*}
and the function 
\begin{equation*}
u_3(x,t)=e^{-\alpha t}v(xe^{-\beta t})
\end{equation*}
is an eternal solution of \eqref{diffusion-eqn} in 
$\R^n\times (-\infty,\infty)$ if 
\begin{equation*}\label{eternal}
\alpha=\frac{2\beta}{1-m}.
\end{equation*}
The equation \eqref{elliptic-eqn} also appears in the study of the 
extinction behaviour of the solution of \eqref{diffusion-eqn} near 
the extinction time \cite{DS1}. Hence in order to understand the 
solutions of \eqref{diffusion-eqn} and the local conformally flat
Yamabe solitons, it is important to study the various properties of 
the solutions of \eqref{elliptic-eqn}. Note that $v$ is a radially symmetric 
solution of \eqref{elliptic-eqn} if and only if
\begin{equation}\label{ode}
\frac{n-1}{m}\left((v^m)''+\frac{n-1}{r}(v^m)'\right)+\alpha v+\beta rv'=0,
\quad v>0,
\end{equation}
in $(0,\infty)$ with
\begin{equation}\label{initial-cond}
v'(0)=0,\quad v(0)=\eta
\end{equation}
for some constant $\eta>0$. In \cite{H} I have proved that for any $n$, $m$, 
satisfying \eqref{m-range} and 
$$
\alpha\le\frac{\beta (n-2)}{m}\quad\mbox{ and }\quad\beta>0,
$$
there exists a unique global solution of \eqref{ode}, \eqref{initial-cond}. 
In this paper we will prove the following non-existence results for radially 
symmetric solutions of \eqref{elliptic-eqn}.

\begin{thm}\label{nonexistence-thm}
Let $m$, $n$, satisfy \eqref{m-range} and let $\eta>0$, $\alpha<0$ and 
$\beta\le 0$. Then \eqref{ode}, \eqref{initial-cond}, has no solution in 
$(0,\infty)$.
\end{thm}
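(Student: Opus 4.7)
My plan is to reduce Theorem~\ref{nonexistence-thm} to the classical Keller--Osserman non-existence for superlinear elliptic inequalities, after first establishing that any such $v$ is monotonically increasing.

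First I would show $v'(r)>0$ for every $r>0$ in the interval of existence. Writing the ODE in divergence form, $\bigl(r^{n-1}(v^m)'\bigr)'=-\tfrac{m r^{n-1}}{n-1}(\alpha v+\beta r v')$, and Taylor-expanding near $r=0$ using $v(0)=\eta$, $v'(0)=0$, the right-hand side equals $-\tfrac{m\alpha\eta}{n-1}r^{n-1}+O(r^{n+1})>0$ since $\alpha<0$, so $r^{n-1}(v^m)'$ is strictly increasing off the origin and $v'>0$ just after $0$. A first zero $r_0>0$ of $v'$ would force $(v^m)''(r_0)\le 0$ (since $(v^m)'$ descends to $0$ at $r_0$), whereas the ODE at $r_0$ gives $(v^m)''(r_0)=-\tfrac{m\alpha v(r_0)}{n-1}>0$; this contradiction yields $v'>0$ throughout.

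Using $v'\ge 0$ and $\beta\le 0$, one has $\beta r v'\le 0$, hence
\begin{equation*}
\frac{n-1}{m}\Delta v^m=-\alpha v-\beta r v'\ge |\alpha| v=|\alpha|(v^m)^{1/m}.
\end{equation*}
Setting $w:=v^m$, $p:=1/m$, and $c_0:=\tfrac{m|\alpha|}{n-1}$, and noting $p>1$ since $0<m\le\tfrac{n-2}{n}<1$, this becomes $\Delta w\ge c_0 w^p$ on $\R^n$ with $w>0$. But no such positive supersolution can exist globally on $\R^n$. Concretely, I would apply a Keller--Osserman test-function argument: with $\phi_R(x)=\zeta(|x|/R)^{2p/(p-1)}$ for a standard smooth cutoff $\zeta$ equal to $1$ on $[0,1]$ and supported in $[0,2)$, multiplying $\Delta w\ge c_0 w^p$ by $\phi_R$, integrating by parts, and applying H\"older's inequality with conjugate exponents $p$ and $p/(p-1)$ yields
\begin{equation*}
c_0^{p/(p-1)}\int_{\R^n}\phi_R w^p\le\int_{\R^n}\phi_R^{-1/(p-1)}|\Delta\phi_R|^{p/(p-1)}\le C R^{\,n-2p/(p-1)},
\end{equation*}
where the power $2p/(p-1)$ is tuned so the right-hand integrand is uniformly bounded on the annulus $R\le|x|\le 2R$ and vanishes elsewhere. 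Combined with $w\ge\eta^m$ (so $\int\phi_R w^p\ge c\eta^{mp}R^n$), this forces $R^{2p/(p-1)}\le C'$ uniformly in $R$---impossible as $R\to\infty$ since $p>1$.

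The load-bearing step is the monotonicity argument: without $v'\ge 0$ one cannot control the sign of the drift $\beta r v'$, and the reduction to a pure superlinear source would fail. Once monotonicity is secured the remaining steps are standard, and the Keller--Osserman argument works uniformly for any $p>1$, covering the full parameter range $0<m\le(n-2)/n$ without needing to separate $\beta<0$ and $\beta=0$.
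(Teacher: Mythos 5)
Your proof is correct, but it takes a genuinely different route from the paper's. The paper multiplies \eqref{ode} by $r^{n-1}$ and integrates to obtain the identity \eqref{v-integral-eqn}, then splits into three cases according to whether $0>n\beta>\alpha$, $0>\alpha\ge n\beta$, or $\beta=0$; in the first two cases it extracts a pointwise first-order differential inequality $v^{m-2}v'\ge C_1 r$, and in the third a second-order inequality $(v^m)''\ge c\,v$, whose integration forces $v$ to blow up at a finite radius, contradicting global existence and incidentally giving an explicit bound on the maximal interval of existence. You instead first prove $v'>0$ by a self-contained first-critical-point argument --- and the sign computation at a putative first zero $r_0$ of $v'$ is sound, since $v'(r_0)=0$ kills both the drift $\beta r v'$ and the term $\tfrac{n-1}{r}(v^m)'$, leaving $(v^m)''(r_0)=-m\alpha v(r_0)/(n-1)>0$ against the necessary $(v^m)''(r_0)\le 0$ --- which replaces the paper's appeal to Lemma 2.1 of \cite{H} in Case 2 and removes the case analysis in $\beta$ altogether. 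You then discard the drift using $\beta\le 0$ and $v'\ge 0$ to land on $\Delta w\ge c_0 w^{p}$ with $w=v^m\ge\eta^m$ and $p=1/m>1$, and invoke the Keller--Osserman/rescaled-test-function Liouville theorem. Your exponent bookkeeping is right: with $\phi_R=\zeta(|x|/R)^{2p/(p-1)}$ the powers of $\zeta$ in $\phi_R^{-1/(p-1)}|\Delta\phi_R|^{p/(p-1)}$ cancel exactly, giving the bound $CR^{\,n-2p/(p-1)}$, and because you have the uniform lower bound $w\ge\eta^m$ the comparison $R^{n}\le C'R^{\,n-2p/(p-1)}$ fails for large $R$ for \emph{every} $p>1$, so no subcriticality restriction is needed. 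In short, the paper's argument buys quantitative finite-radius blow-up with explicit constants, while yours buys a uniform treatment of all $\beta\le 0$ and a clean reduction to a classical nonexistence theorem.
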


We will also prove the following property of Yamabe solitons.

\begin{thm}\label{u-R-limit-thm}
Let $n\ge 3$, $m=\frac{n-2}{n+2}$, $\alpha=\frac{2\beta+\rho}{1-m}$, and 
let $v$ be a radially solution of \eqref{elliptic-eqn}. Let $R(x)$ be the 
scalar curvature of the metric $g=v^{\frac{4}{n+2}}dx^2$ on $\R^n$. Then the 
following holds.
\begin{enumerate}
\item[(i)] If either 
\begin{equation}\label{rho-beta-cond}
\left\{\begin{aligned}
&\rho=0\\
&\beta>0\end{aligned}\right.
\qquad\mbox{ or }\qquad
\beta>\rho/(n-2)>0
\end{equation}
holds, then
\begin{equation}\label{R-limit}
\lim_{|x|\to\infty}R(x)=\rho.
\end{equation}
\item[(ii)] If $\rho<0$ and $\alpha>0$, then 
\begin{equation}\label{R-limit2}
\lim_{|x|\to\infty}R(x)=0.
\end{equation}
\item[(iii)] If $\beta>\frac{\rho}{n-2}>0$, then
\begin{equation}\label{v-limit}
\lim_{|x|\to\infty}|x|^2v^{1-m}(x)=\frac{(n-1)(n-2)}{\rho}
\end{equation} 
and
\begin{equation}\label{r-v-upper-lower-bd}
0<|x|^2v^{1-m}(x)<\frac{(n-1)(n-2)}{\rho}\quad\forall x\ne 0.
\end{equation} 
\end{enumerate}
\end{thm}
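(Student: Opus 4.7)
The plan is to recast the radial ODE \eqref{ode} as an autonomous planar dynamical system. Using similarity variables $F(r) = rv'(r)/v(r)$ and $\psi(r) = r^2 v(r)^{1-m}$ together with the change of independent variable $t = \log r$, a direct computation from \eqref{ode} (with $m = (n-2)/(n+2)$ and $\alpha = (2\beta+\rho)/(1-m)$) yields, with $\dot{\phantom{F}} = d/dt$,
\begin{align*}
\dot F &= -mF^2 - (n-2)F - \frac{\psi}{n-1}(\alpha + \beta F),\\
\dot\psi &= \psi[2 + (1-m)F].
\end{align*}
A short conformal computation of the scalar curvature of $g = v^{4/(n+2)}dx^2$ gives $R = 2\beta + \rho + \tfrac{4\beta}{n+2}F$, so every statement about a limit of $R$ reduces to a statement about a limit of $F$.

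For part (iii), the unique equilibrium of the system with $\psi>0$ is
\[
(F_\star,\psi_\star) = \Bigl(-\tfrac{n+2}{2},\,\tfrac{(n-1)(n-2)}{\rho}\Bigr),
\]
and I would show that the trajectory of the global solution, which issues from $(F,\psi)=(0,0)$ as $r \to 0^+$ (by \eqref{initial-cond}), converges to it. The Jacobian there has determinant $n-2>0$ and trace $-(n-2)\beta/\rho < 0$, so the equilibrium is asymptotically stable. Under the hypothesis $\beta > \rho/(n-2) > 0$ I would construct a trapping region bounded by segments of the nullclines $\{F = F_\star\}$, $\{\dot F = 0\}$, and a suitable $\psi$-level set, verify positive invariance on each side by checking signs of $\dot F$ and $\dot\psi$, and apply Poincar\'e--Bendixson (there is no limit cycle since the only interior equilibrium is a sink). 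The sign $\dot\psi > 0$ on the interior of this region simultaneously yields the convergence $\psi(r) \to \psi_\star$, which is \eqref{v-limit}, and the strict upper bound in \eqref{r-v-upper-lower-bd}.

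Part (i) under $\beta > \rho/(n-2)>0$ then follows immediately since $F \to F_\star$ and $R = 2\beta + \rho + \tfrac{4\beta}{n+2}F_\star = \rho$. The case $\rho=0$, $\beta>0$ of part (i) requires separate handling because $\psi_\star = \infty$ in that limit; here I would argue directly that $F \to -(n+2)/2$ by showing $F$ is eventually monotone in a strip $F_\star < F < 0$ and analyzing the balance in the $\dot F$ equation (with $\alpha = \tfrac{n+2}{2}\beta>0$ and $\psi$ unbounded), giving $R = 2\beta + \tfrac{4\beta}{n+2}F \to 0$.

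The principal obstacle is part (ii), the expanding case $\rho<0$, $\alpha>0$. Now no equilibrium with $\psi > 0$ exists, and the expected behavior is $\psi \to +\infty$ with $F \to -\alpha/\beta$, which is precisely the value forcing $R \equiv 0$. The mechanism is a balance in the $\dot F$ equation: for $\dot F$ to remain bounded as $\psi \to \infty$, one needs $\alpha + \beta F = O(1/\psi)$. Making this rigorous is the hard part; I would try to multiply the $F$-equation by an integrating factor (most likely a power of $\psi$) to derive a conservation-law type identity from which both $\psi \to \infty$ and the decay rate of $\alpha + \beta F$ can be extracted, and then invoke the scalar-curvature formula to conclude \eqref{R-limit2}.
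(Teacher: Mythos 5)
Your planar reduction is computed correctly: the system $\dot F=-mF^2-(n-2)F-\frac{\psi}{n-1}(\alpha+\beta F)$, $\dot\psi=\psi[2+(1-m)F]$ for $F=rv'/v$, $\psi=r^2v^{1-m}$, $t=\log r$ is equivalent to the paper's \eqref{w-tilde-eqn}, the identity $R=2\beta+\rho+\frac{4\beta}{n+2}F$ is \eqref{R-eqn}, and the equilibrium $(F_\star,\psi_\star)=(-\frac{n+2}{2},\frac{(n-1)(n-2)}{\rho})$ matches the limit values in \eqref{v-limit} and Lemma \ref{v'/v-limit-lem}. So at the structural level you and the paper agree: every limit of $R$ is a limit of $F$. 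But the proposal defers exactly the steps that constitute the proof. For (iii) the trapping region is never exhibited, positive invariance is never checked, and the exclusion of periodic orbits ``since the only interior equilibrium is a sink'' is not a valid inference: a closed orbit must enclose equilibria of total index $+1$, and a sink has index $+1$, so index theory permits a cycle around it (the correct observation is that $\dot\psi>0$ wherever $F>F_\star$, so no closed orbit can lie in that half-plane). More seriously, your own Jacobian data ($\det=n-2$, $\mathrm{tr}=-(n-2)\beta/\rho$) give discriminant $(n-2)\bigl[(n-2)\beta^2/\rho^2-4\bigr]$, which is negative whenever $\frac{\rho}{n-2}<\beta<\frac{2\rho}{\sqrt{n-2}}$ --- a nonempty subrange of the hypothesis for every $n\ge3$. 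There the sink is a focus, a trajectory converging to it must wind around it and so cannot remain in $\{F>F_\star\}$, and hence the region you describe cannot be positively invariant up to the equilibrium; in particular the strict one-sided bound \eqref{r-v-upper-lower-bd} cannot be read off from ``$\dot\psi>0$ on the interior of this region'' without an argument you have not supplied. This is a concrete obstruction, not a routine verification left to the reader.

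Parts (i) with $\rho=0$ and (ii) are acknowledged sketches: ``analyzing the balance'' and ``I would try to multiply by an integrating factor\dots Making this rigorous is the hard part'' are plans, not proofs, and for (ii) no candidate identity is even written down. The paper closes precisely these gaps by importing sharp asymptotics from \cite{H}: $r^2v^{1-m}/\log r\to a_1$ in the steady case (Theorem 1.3 of \cite{H}) and $r^2v^{2k}\to a_2\in(0,\infty)$ in the expanding case (Theorem 1.6 of \cite{H}), after which a l'Hospital argument yields $F\to-2/(1-m)$ resp.\ $F\to-1/k$ and \eqref{R-eqn} gives \eqref{R-limit} and \eqref{R-limit2}; for (iii) it first proves boundedness and monotonicity of $w=r^2v^{1-m}$ (Lemmas \ref{r2v(1-m)+v'-bd-lem}, \ref{soliton-bd-lem}(v), \ref{v-infty-growth2}, \ref{r-v-limit-lem}), then pins down the limit via \eqref{w-tilde-eqn} by showing $\tilde w_s\to0$ forces $\tilde w_{ss}\to0$. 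If you want to pursue the phase-plane route you would need to either reprove those growth rates inside your framework or cite them as the paper does; as written, the proposal establishes none of (i), (ii), (iii).
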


We will give a simple different proof of the result of 
P.~Daskalopoulos and N.~Sesum \cite{DS2} on the positivity of the 
sectional curvature of Yamabe solitons. We also find the exact value 
of the sectional curvature of such Yamabe solitons at the origin and at 
infinity. More precisely we will prove the following theorem.

\begin{thm}\label{positivity-thm}
Let $g=v^{\frac{4}{n+2}}dx^2$ be a rotationally symmetric metric 
such that $v$ satisfies \eqref{elliptic-eqn} with $m=\frac{n-2}{n+2}$, 
$\alpha=\frac{2\beta+\rho}{1-m}$, which is either a steady Yamabe soliton
($\rho=0$) with $\alpha>0$, or a Yamabe expanding soliton ($\rho<0$) with
$\alpha>0$, or a Yamabe shrinking soliton ($\rho>0$) with $\beta>\rho/(n-2)$. 
Then $g$ has strictly positive sectional curvature. If $K_0$ and $K_1$ 
are the sectional curvatures of the $2$-planes perpendicular to and 
tangent to the spheres $\{x\}\times S^{n-1}$ respectively, then
\begin{equation}\label{K(0)}
K_0(0)=K_1(0)=\frac{2\beta+\rho}{n(n-1)},
\end{equation}
\begin{equation}\label{K0-limit}
\lim_{r\to\infty}K_0(r)=0,
\end{equation}
and
\begin{equation}\label{K1-limit}
\lim_{r\to\infty}K_1(r)=\left\{\begin{aligned}
&\frac{\rho}{(n-1)(n-2)}\quad\mbox{ if $g$ is a shrinking Yamabe soliton}\\
&0\qquad\qquad\qquad\mbox{ if $g$ is a steady or expanding Yamabe soliton}.
\end{aligned}\right.
\end{equation}
\end{thm}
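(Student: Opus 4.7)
The plan is to work in the arc-length coordinate along the radial geodesic. Setting $s(r) = \int_0^r v(\tau)^{(1-m)/2}\,d\tau$, the metric takes the rotationally symmetric form $g = ds^2 + \phi(s)^2\,d\omega_{n-1}^2$ with $\phi(s) = r\,v(r)^{(1-m)/2}$, so the two sectional curvatures become
\begin{equation*}
K_0 = -\ddot\phi/\phi,\qquad K_1 = (1-\dot\phi^2)/\phi^2,
\end{equation*}
where dots denote $d/ds$. A short computation gives $\dot\phi = 1 + (1-m)rv'/(2v)$. The substitution $u = v^m$ puts the metric into the standard Yamabe form $g = u^{4/(n-2)}\delta$, so the conformal formula $R = -\frac{4(n-1)}{n-2}u^{-(n+2)/(n-2)}\Delta u$ combined with \eqref{elliptic-eqn} yields the clean identity
\begin{equation*}
R = \rho + 2\beta\dot\phi,
\end{equation*}
which will drive the rest of the argument.

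The origin values will be obtained by Taylor expansion. Since $v$ is even, $v'(0) = 0$, and the limiting form of \eqref{ode} at $r = 0$ gives $(v^m)''(0) = -m\alpha v(0)/(n(n-1))$, hence $v''(0) = -\alpha v(0)^{2-m}/(n(n-1))$. Writing $\phi(s) = s + cs^3 + O(s^5)$ then yields $K_0(0) = K_1(0) = -6c = (1-m)\alpha/(n(n-1)) = (2\beta+\rho)/(n(n-1))$, matching \eqref{K(0)}. For the limits at infinity: in the shrinking case Theorem~\ref{u-R-limit-thm}(i) gives $R \to \rho$ so $\dot\phi \to 0$, while Theorem~\ref{u-R-limit-thm}(iii) gives $\phi^2 = r^2v^{1-m} \to (n-1)(n-2)/\rho$, yielding $K_1 \to \rho/((n-1)(n-2))$; in the steady and expanding cases $R \to 0$ by Theorem~\ref{u-R-limit-thm}(i,ii) and a separate argument will show $\phi \to \infty$, so $K_1 \to 0$. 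The limit of $K_0$ in all three cases follows from $R = 2(n-1)K_0 + (n-1)(n-2)K_1$.

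The main obstacle is the pointwise positivity. Differentiating the key identity gives $R_s = 2\beta\ddot\phi = -2\beta\phi K_0$, so positivity of $K_0$ is equivalent to strict monotone decrease of $R$ along the radial geodesic. To prove the latter I will first establish $v'(r) < 0$ for all $r > 0$ using a maximum-principle argument: at a putative first critical point $r_0 > 0$ the ODE forces $v''(r_0) = -\alpha v(r_0)^{2-m}/(n-1) < 0$, contradicting the structure of a first critical point. Then, writing $P = rv'/v$ so that $R = \rho + 2\beta + \beta(1-m)P$, I will differentiate and use the ODE to substitute for $v''$, yielding an explicit expression for $P'(r)$ that must be shown to be strictly negative under each of the three hypotheses. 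The delicate point is the shrinking case, where the hypothesis $\beta > \rho/(n-2)$ (strictly stronger than $\beta > 0$) will be needed to dominate the competing terms in the formula for $P'$. Once $R$ is strictly decreasing, $\dot\phi$ decreases strictly from $1$ at the origin down to its limit at infinity --- which is $0$ in the shrinking and steady cases, and equals $-\rho/(2\beta) \in (0,1)$ in the expanding case (using $\alpha > 0$) --- so $\dot\phi^2 < 1$ throughout and hence $K_1 > 0$.
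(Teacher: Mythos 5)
Your geometric framework coincides with the paper's: the arc-length variable, $\psi=rv^{(1-m)/2}$, the formulas $K_0=-\psi_{\tilde{s}\tilde{s}}/\psi$, $K_1=(1-\psi_{\tilde{s}}^2)/\psi^2$, and the identity $R=\rho+2\beta\psi_{\tilde{s}}$ are all exactly what the paper uses. Two of your steps are correct and arguably cleaner than the paper's: the Taylor expansion $\phi=s+cs^3+O(s^5)$ does yield \eqref{K(0)} (the paper instead computes $K_0(0)$ by l'Hospital on an integral representation of $R'$), and deducing \eqref{K0-limit} from the trace identity $R=2(n-1)K_0+(n-1)(n-2)K_1$ together with the already-established limits of $R$ and $K_1$ is slicker than the paper's two-case l'Hospital argument. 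Your deferred claim that $\phi\to\infty$ in the steady and expanding cases is exactly the growth estimate \eqref{r2v(1-m)-lower-bd} quoted from \cite{H}, so that deferral is harmless. Your first-critical-point argument for $v'<0$ is also fine.

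The gap is at the central step, the strict decrease of $R$. Writing $P=rv'/v$ and $w=r^2v^{1-m}$ and substituting \eqref{ode} gives
\begin{equation*}
rP'(r)=-(n-2)P-mP^2-\frac{w}{n-1}\left(\alpha+\beta P\right)
=-(n-2)P-mP^2-\frac{wR}{(n-1)(1-m)}.
\end{equation*}
Since $P<0$ and $R>0$, the first term is strictly \emph{positive} while the other two are negative, so this expression is not sign-definite and "showing it is strictly negative" amounts to the pointwise comparison $(n-2)|P|<mP^2+\frac{wR}{(n-1)(1-m)}$ along the entire trajectory; that comparison is essentially equivalent to the claim being proved, does not follow from the a priori bounds available (both $|P|$ and $w$ vanish to order $r^2$ at the origin, so no crude global bound closes the argument there), and the hypothesis $\beta>\rho/(n-2)$ does not visibly enter this formula in a way that dominates the positive term. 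The paper circumvents this entirely by passing to the \emph{second-order} elliptic equation satisfied by $R$ itself (Chow's identity, used in Lemma \ref{R'<0}), $(n-1)\Delta R+\beta(x\cdot\nabla R)v^{1-m}+R(R-\rho)v^{1-m}=0$, whose zeroth-order term $R(R-\rho)$ is pointwise positive by parts (i) and (iii) of Lemma \ref{soliton-bd-lem} (this is where $\beta>\rho/(n-2)$ is actually used); integrating against the factor $r^{n-1}e^{\frac{\beta}{n-1}\int_0^r\tau v^{1-m}\,d\tau}$ then gives $R'<0$ immediately. You would need either to import that argument or to carry out a genuine phase-plane analysis of the $(P,w)$ system; as written, the proposal does not establish $K_0>0$, and hence neither $K_1>0$ (which you derive from the monotonicity of $\psi_{\tilde{s}}$).
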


Note that the results \eqref{K(0)}, \eqref{K0-limit} and \eqref{K1-limit}, 
are entirely new. 
The plan of the paper is as follows. We will prove the non-existence of 
solutions of \eqref{ode}, \eqref{initial-cond}, and prove various 
properties of solutions of \eqref{ode}, \eqref{initial-cond}, in section 
two. In section three we will prove the asymptotic behaviour of the 
metric and the scalar curvature of the Yamabe solitons. In section 
four we will give another proof of the positivity of the sectional 
curvature of Yamabe solitons. We will also prove the asymptotic behaviour of 
the sectional curvature of the Yamabe solitons as $|x|\to\infty$. 

\section{Non-existence and properties of solutions of the nonlinear elliptic 
equation}
\setcounter{equation}{0}
\setcounter{thm}{0}

For any $\beta\in\R$, $\alpha\ne 0$, let $k=\beta/\alpha$.
We first recall a result of \cite{H}.

\begin{lem}\label{v'-bd-lem}(Lemma 2.1 of \cite{H})
Let $m$, $\alpha\ne 0$, $\beta\ne 0$, satisfy \eqref{m-range} and 
\begin{equation}\label{alpha-beta-relation}
\frac{m\alpha}{\beta}\le n-2.
\end{equation}
For any $R_0>0$ and $\eta>0$, let $v$ be the solution of \eqref{ode}, 
\eqref{initial-cond}, in $(0,R_0)$. Then  
\begin{equation}\label{basic-monotone-ineqn}
v+krv'(r)>0\quad\mbox{ in }[0,R_0)
\end{equation}
and
\begin{equation}\label{v'-bd0}
\left\{\begin{aligned}
&v'(r)<0\quad\mbox{ in }(0,R_0)\quad\mbox{ if }\alpha>0\\
&v'(r)>0\quad\mbox{ in }(0,R_0)\quad\mbox{ if }\alpha<0.\end{aligned}\right.
\end{equation}
\end{lem}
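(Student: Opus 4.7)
The plan is to introduce $f(r):=v(r)+krv'(r)$, rewrite the ODE \eqref{ode} in terms of $f$, and prove both claims by a first-contact argument at a hypothetical first interior zero of $v'$ or of $f$. Since $\alpha v+\beta rv'=\alpha f$, equation \eqref{ode} takes the convenient form
\begin{equation*}
\frac{1}{r^{n-1}}\bigl(r^{n-1}(v^m)'\bigr)' \;=\; -\frac{m\alpha}{n-1}\,f,\qquad f(0)=\eta>0.
\end{equation*}

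For \eqref{v'-bd0}, I would first let $r\to 0^+$ in \eqref{ode}. Using $v'(0)=0$ together with $r^{-1}(v^m)'(r)\to (v^m)''(0)$ yields $\tfrac{n(n-1)}{m}(v^m)''(0)=-\alpha\eta$, so $v''(0)$ has sign opposite to $\alpha$ and $v$ is strictly monotone on a right neighborhood of $0$ in the claimed direction. If there were a first $r_0\in(0,R_0)$ with $v'(r_0)=0$, inserting this into \eqref{ode} (the $v'$ and $(v^m)'$ terms drop out) would give $v''(r_0)=-\alpha v(r_0)^{2-m}/(n-1)$. For $\alpha>0$ this forces $v''(r_0)<0$, i.e.\ $v'$ is strictly decreasing through $0$ at $r_0$, so $v'(r_0-\varepsilon)>0$ for small $\varepsilon>0$, contradicting $v'<0$ on $(0,r_0)$. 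The case $\alpha<0$ is symmetric.

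For \eqref{basic-monotone-ineqn}, I would expand $(v^m)''$ in \eqref{ode} and solve for $v''$ to obtain
\begin{equation*}
v''\;=\;-\frac{\alpha v^{1-m}}{n-1}\,f+(1-m)\frac{(v')^2}{v}-\frac{n-1}{r}\,v',
\end{equation*}
then substitute this together with $v'=(f-v)/(kr)$ into the identity $f'=(1+k)v'+krv''$. A short cancellation gives, at any $r_*$ where $f(r_*)=0$,
\begin{equation*}
f'(r_*)\;=\;\frac{v(r_*)\bigl((n-2)k-m\bigr)}{k\,r_*}.
\end{equation*}
Since $f(0)=\eta>0$, a hypothetical first zero $r_0>0$ of $f$ would require $f'(r_0)\le 0$. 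A sign check shows that hypothesis \eqref{alpha-beta-relation}, which reads $m/k\le n-2$, forces $\bigl((n-2)k-m\bigr)/k\ge 0$ in both cases $k>0$ and $k<0$ (the case $k<0$ is automatic, since then $m/k<0<n-2$), with strict inequality unless $k>0$ and $m/k=n-2$. This produces the desired contradiction outside the boundary case.

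The main obstacle is the borderline equality case $m\alpha/\beta=n-2$, in which $f'(r_0)=0$ and the first-order test is inconclusive. I plan to handle it by computing $f''(r_0)$ directly from the ODE, using the extra relations $f(r_0)=0$ and $f'(r_0)=0$ to eliminate $v'(r_0)$ and $v''(r_0)$ before concluding that $f''(r_0)>0$; alternatively, the integrated form
\begin{equation*}
\frac{n-1}{m}\,r^{n-1}(v^m)'(r)\;=\;-\alpha\int_{0}^{r} s^{n-1}f(s)\,ds
\end{equation*}
expresses $f$ as $f(r)=v(r)-\tfrac{\beta\,v^{1-m}(r)}{(n-1)r^{n-2}}\int_0^{r}s^{n-1}f(s)\,ds$, on which a Gronwall-type bound on $[0,r_0]$ rules out the first sign change.
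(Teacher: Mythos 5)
The paper offers no proof of this lemma at all: it is quoted as Lemma 2.1 of \cite{H}, so your argument has to stand on its own. The parts you actually carry out are correct. The computation $v''(0)=-\alpha\eta^{2-m}/(n(n-1))$ plus the observation that any interior critical point $r_0$ of $v$ satisfies $v''(r_0)=-\alpha v(r_0)^{2-m}/(n-1)$ gives \eqref{v'-bd0} cleanly (and, as you implicitly use, without needing \eqref{alpha-beta-relation}). I have also checked your key identity: at any zero $r_*$ of $f=v+krv'$ one indeed gets $f'(r_*)=v(r_*)\bigl((n-2)k-m\bigr)/(kr_*)$, and together with $f(0)=\eta>0$ this proves \eqref{basic-monotone-ineqn} whenever $k<0$ or $m\alpha/\beta<n-2$ strictly.

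The genuine gap is the equality case $m\alpha/\beta=n-2$ (with $k>0$), which is part of the statement, and neither of your proposed repairs works as described. First, even if you could show $f''(r_0)>0$ at a first zero $r_0$, that yields no contradiction: $f(r_0)=f'(r_0)=0$ with $f''(r_0)>0$ describes $f$ touching zero from above, which is perfectly compatible with $f>0$ on $[0,r_0)$ --- yet $f(r_0)=0$ already violates the strict inequality you must prove, so a second-order test with that sign cannot close the argument. Worse, the premise is false: carrying out the computation with $f(r_0)=f'(r_0)=0$ gives
\begin{equation*}
f''(r_0)=\frac{v(r_0)}{r_0^2}\left[\frac{1-m+(n-1)k}{k^2}\bigl(2m+(3-n)k\bigr)-(n-1)\frac{1+k}{k}\right],
\end{equation*}
which upon substituting $m=(n-2)k$ collapses to $\frac{(n-1)v(r_0)}{r_0^2}\cdot\frac{(n-2)k-m}{k}=0$. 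The degeneracy is structural: when $m/k=n-2$ the profile $v_*(r)=cr^{-1/k}$ satisfies both $v_*+krv_*'\equiv0$ and the radial equation (since $v_*^m=c^mr^{2-n}$ is harmonic), so your barrier is itself a solution and no finite-order derivative test at a contact point can succeed; a uniqueness or continuation argument is required. Second, the integral identity $f(r)=v(r)-\frac{\beta}{n-1}v^{1-m}(r)r^{2-n}\int_0^rs^{n-1}f(s)\,ds$ does settle $\beta<0$ instantly (the subtracted term is then negative), but in the relevant case the integral enters with the unfavorable sign: Gronwall produces upper bounds from inequalities of the form $f\le a+\int Kf$, whereas here you need a positive lower bound on $f$, and you give no mechanism for extracting one. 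Until the borderline case is actually handled, the proof of \eqref{basic-monotone-ineqn} is incomplete.
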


\noindent{\ni{\it Proof of Theorem \ref{nonexistence-thm}}:}
Suppose there exists a solution $v$ for \eqref{ode}, \eqref{initial-cond}, 
in $(0,\infty)$. Multiplying \eqref{ode} by $r^{n-1}$ and integrating we have
\begin{align}\label{v-integral-eqn}
\frac{n-1}{m}r^{n-1}(v^m)'(r)=&-\alpha\int_0^rz^{n-1}v(z)\,dz
-\beta\int_0^rz^nv'(z)\,dz\notag\\
=&-\beta r^nv(r)+(n\beta -\alpha)\int_0^rz^{n-1}v(z)\,dz\quad\forall r>0.
\end{align}
We now divide the proof into three cases.

\noindent{\bf Case 1}: $0>n\beta>\alpha$.

\noindent By \eqref{v-integral-eqn},
\begin{equation*}
\frac{n-1}{m}r^{n-1}(v^m)'(r)\ge|\beta|r^nv(r)\quad\Rightarrow\quad
(n-1)v^{m-2}v'(r)\ge|\beta|r\quad\forall r>0.
\end{equation*}

\noindent{\bf Case 2}: $0>\alpha\ge n\beta$. 

\noindent By \eqref{m-range}, \eqref{alpha-beta-relation} holds.
Since $\alpha<0$, by Lemma \eqref{v'-bd-lem} $v'(r)>0$ for any $r>0$.
Then by \eqref{v-integral-eqn},
\begin{equation*}
\frac{n-1}{m}r^{n-1}(v^m)'(r)\ge-\beta r^nv(r)-\frac{\alpha-n\beta}{n}r^nv(r)
=\frac{|\alpha|}{n}r^nv(r)
\end{equation*}
for any $r>0$. Hence
\begin{equation*}
(n-1)v^{m-2}v'(r)\ge\frac{|\alpha|}{n}r\quad\forall r>0.
\end{equation*}
By case 1 and case 2,
\begin{equation}\label{v-integral-ineqn}
v^{m-2}v'(r)\ge C_1r\quad\forall r>0
\end{equation}
where
$$
C_1=\frac{1}{n-1}\min\left(\frac{|\alpha|}{n},|\beta|\right).
$$
Integrating \eqref{v-integral-ineqn} over $(0,r)$,
\begin{align*}
&\frac{1}{1-m}(\eta^{m-1}-v(r)^{m-1})\ge\frac{C_1}{2}r^2\quad\forall r>0\\
\Rightarrow\quad&v^{1-m}(r)\ge\left(\eta^{m-1}-\frac{(1-m)C_1}{2}r^2\right)^{-1}
\to\infty\quad\mbox{ as }\quad r\nearrow\sqrt{2/(C_1(1-m))}\eta^{\frac{m-1}{2}}.
\end{align*}
Contradiction arises. Hence \eqref{ode}, \eqref{initial-cond}, has no
global solution when either case 1 or case 2 holds.

\noindent{\bf Case 3}: $\alpha<0$ and $\beta=0$.  

\noindent By \eqref{v-integral-eqn},
\begin{align}\label{v'-upper-bd}
&\frac{n-1}{m}r^{n-1}(v^m)'(r)=|\alpha|\int_0^rs^{n-1}v(s)\,ds>0
\quad\forall r>0\notag\\
\Rightarrow\quad&0<\frac{n-1}{m}r^{n-1}(v^m)'(r)\le\frac{|\alpha|}{n}r^nv(r)
\quad\forall r>0\notag\\
\Rightarrow\quad&0<\frac{n-1}{r}(v^m)'(r)\le\frac{m|\alpha|}{n}v(r)
\quad\forall r>0.
\end{align}
Hence by \eqref{ode} and \eqref{v'-upper-bd},
\begin{align}
|\alpha|v=&\frac{n-1}{m}\left((v^m)''(r)+\frac{n-1}{r}(v^m)'(r)\right)\notag\\
\le&\frac{n-1}{m}\left((v^m)''(r)+\frac{m|\alpha|}{n}v(r)\right)
\qquad\qquad\qquad\quad\,\forall r>0\notag\\
\Rightarrow\qquad\qquad&(v^m)''(r)\ge\frac{m|\alpha|}{n(n-1)}v(r)
\qquad\qquad\qquad\qquad\quad\forall r>0\label{vm''-lower-bd}\\
\Rightarrow\qquad\qquad&(v^m)_r^2(r)\ge\frac{2m^2|\alpha|}{n(n-1)(1+m)}
(v^{1+m}(r)-\eta^{1+m})\quad\forall r>0\label{vm'-lower-bd}.
\end{align}
Since $v(r)>v(0)$ for all $r>0$, by \eqref{vm''-lower-bd} $v(r)\to\infty$
as $r\to\infty$. Hence there exists a constant $R_1>0$ such that
\begin{equation}\label{v^{m+1}-lower-bd}
v^{1+m}(r)>2\eta^{1+m}\quad\forall r\ge R_1.
\end{equation}
By \eqref{vm'-lower-bd} and \eqref{v^{m+1}-lower-bd}, there exists a constant
$C_2>0$ such that
\begin{align*}
&(v^m)_r^2(r)\ge C_2^2m^2v^{1+m}(r)\quad\forall r\ge R_1\\
\Rightarrow\quad&v^{-\frac{3-m}{2}}v'(r)\ge C_2\qquad\qquad\forall r\ge R_1\\
\Rightarrow\quad&v^{\frac{1-m}{2}}(r)\ge\left(v(R_1)^{\frac{m-1}{2}}
-\frac{(1-m)C_2}{2}(r-R_1)\right)^{-1}\to\infty\quad\mbox{ as }r\nearrow
R_1+\frac{2v(R_1)^{\frac{m-1}{2}}}{(1-m)C_2}.
\end{align*}
Contradiction arises. Hence \eqref{ode}, \eqref{initial-cond}, has no
global solution in case 3 and the theorem follows.

\hfill$\square$\vspace{6pt}

Note that if $\alpha=\beta=0$, then the constant fucntion $v=\eta$ is a solution
of \eqref{ode}, \eqref{initial-cond}. Hence Theorem \ref{nonexistence-thm} is sharp.
As a result of Theorem \ref{nonexistence-thm} we get the following result of 
\cite{DS2}.

\begin{cor}(cf. Claim 3.1 of \cite{DS2})
Let $n\ge 3$ and $\eta>0$. Suppose $g=v^{\frac{4}{n+2}}dx^2$ is a complete 
locally flat gradient Yamabe soliton with radially symmetric $v$ 
satisfying \eqref{ode}, \eqref{initial-cond}, with $m=\frac{n-2}{n+2}$, 
$\alpha=\frac{2\beta+\rho}{1-m}$, where $\rho=0$ for Yamabe 
steady soliton and $\rho<0$ for Yamabe expanding soliton respectively. 
Then $\beta>0$ if $\rho<0$ and $\beta\ge 0$ if $\rho=0$.
\end{cor}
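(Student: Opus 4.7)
The plan is to reduce the corollary to a direct application of Theorem \ref{nonexistence-thm}. The key observation is that the relation $\alpha=\frac{2\beta+\rho}{1-m}$, together with the fact that $m=\frac{n-2}{n+2}\in(0,1)$ so $1-m>0$, lets one read off the sign of $\alpha$ from the signs of $\beta$ and $\rho$; Theorem \ref{nonexistence-thm} then rules out the forbidden sign combinations.

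First I would treat the Yamabe steady case $\rho=0$. In this case $\alpha=\frac{2\beta}{1-m}$, so $\alpha$ and $\beta$ share the same sign. Suppose for contradiction that $\beta<0$; then $\alpha<0$ as well. Since $m=\frac{n-2}{n+2}$ and $n\ge 3$ satisfy \eqref{m-range}, and since $\eta>0$, $\alpha<0$, $\beta\le 0$, Theorem \ref{nonexistence-thm} applies and asserts that \eqref{ode}, \eqref{initial-cond} has no solution in $(0,\infty)$. This contradicts the hypothesis that such a $v$ exists (the completeness of the soliton metric forces $v$ to be defined on all of $(0,\infty)$). Hence $\beta\ge 0$.

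Next I would treat the Yamabe expanding case $\rho<0$. Suppose for contradiction that $\beta\le 0$. Then $2\beta+\rho\le\rho<0$, so $\alpha=\frac{2\beta+\rho}{1-m}<0$. Again $\alpha<0$ and $\beta\le 0$ with the same $m$, $n$ satisfying \eqref{m-range}, so Theorem \ref{nonexistence-thm} yields the contradiction that \eqref{ode}, \eqref{initial-cond} has no solution on $(0,\infty)$. Therefore $\beta>0$; note that here the conclusion is the strict inequality because the value $\beta=0$ already produces $\alpha=\rho/(1-m)<0$, which is excluded by Theorem \ref{nonexistence-thm}.

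There is no real obstacle: the whole argument is a one-line contradiction in each case once Theorem \ref{nonexistence-thm} is in hand. The only subtlety worth stating explicitly is the strict versus weak inequality distinction between the two cases, which comes precisely from whether $\beta=0$ is compatible with $\alpha<0$ under the constraint $\alpha=\frac{2\beta+\rho}{1-m}$.
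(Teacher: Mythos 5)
Your argument is correct and is exactly the route the paper intends: the corollary is stated immediately after Theorem \ref{nonexistence-thm} with the remark that it follows from that theorem, and your two sign-case applications (using $1-m>0$ to read off $\alpha<0$ from $\beta<0$ when $\rho=0$, and from $\beta\le 0$ when $\rho<0$) are precisely the implicit proof. The observation about why one case yields $\beta\ge 0$ and the other the strict $\beta>0$ is a nice explicit touch.
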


\begin{lem}\label{r2v(1-m)+v'-bd-lem}
Let $n\ge 3$, $0<m<1$, $\alpha\ge n\beta$ and $\alpha>0$, and $\eta>0$. 
Suppose $v$ is a solution of \eqref{ode}, \eqref{initial-cond}. Then 
\begin{equation}\label{r^2v^{1-m}-bd}
0<r^2v^{1-m}(r)\le\frac{2n(n-1)}{\alpha(1-m)}\quad\forall r>0
\end{equation}
and
\begin{equation}\label{v-v'-bd1}
\frac{\alpha}{n(n-1)}r^2v^{1-m}(r)+\frac{rv'(r)}{v(r)}\le 0\quad\forall r>0.
\end{equation}
Hence $v'(r)<0$ for all $r>0$.
\end{lem}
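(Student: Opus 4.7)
The plan is to establish the closed-form identity
\[
F(r):=\frac{\alpha}{n(n-1)}r^2v^{1-m}(r)+\frac{rv'(r)}{v(r)}=\frac{\alpha-n\beta}{(n-1)r^{n-2}v^m(r)}\left[\frac{r^nv(r)}{n}-\int_0^rz^{n-1}v(z)\,dz\right],
\]
which reduces \eqref{v-v'-bd1} to the statement that $v$ is non-increasing, and then to prove $v'<0$ by a single-point contradiction argument. The identity is derived from the integrated form of the ODE already obtained in \eqref{v-integral-eqn}, namely
\[
(n-1)r^{n-1}v^{m-1}(r)v'(r)=-(\alpha-n\beta)\int_0^rz^{n-1}v(z)\,dz-\beta r^nv(r),
\]
by dividing through by $(n-1)r^{n-2}v^m(r)$ to isolate $rv'(r)/v(r)$, adding $\frac{\alpha r^2v^{1-m}}{n(n-1)}$ to both sides, and using $\frac{\alpha}{n}-\beta=\frac{\alpha-n\beta}{n}$.

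With the identity in hand, I would prove $v'(r)<0$ for all $r>0$ as follows. Passing to the limit $r\to 0^+$ in \eqref{ode} (using $\lim_{r\to 0^+}(v^m)'(r)/r=(v^m)''(0)$) gives $(v^m)''(0)=-\frac{m\alpha\eta}{n(n-1)}<0$, so $v$ is strictly decreasing on a small interval $(0,\delta)$. In the degenerate case $\alpha=n\beta$, the identity forces $F\equiv 0$, which rearranges to $v'(r)=-\frac{\alpha rv^{2-m}(r)}{n(n-1)}<0$ directly. In the case $\alpha>n\beta$, suppose for contradiction that $r_0>0$ is the smallest positive zero of $v'$. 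Then $v$ is strictly decreasing on $[0,r_0]$, so $v(z)\ge v(r_0)$ on that interval and the bracket satisfies $\frac{r_0^nv(r_0)}{n}-\int_0^{r_0}z^{n-1}v(z)\,dz=-\int_0^{r_0}z^{n-1}[v(z)-v(r_0)]\,dz\le 0$; the identity therefore yields $F(r_0)\le 0$, whereas $v'(r_0)=0$ gives $F(r_0)=\frac{\alpha r_0^2v^{1-m}(r_0)}{n(n-1)}>0$, a contradiction.

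With $v'<0$ established globally, $v(z)\ge v(r)$ holds on $[0,r]$ for every $r>0$, so the bracket in the identity is non-positive and \eqref{v-v'-bd1} follows immediately. To obtain \eqref{r^2v^{1-m}-bd}, I would rewrite \eqref{v-v'-bd1} as $v^{m-2}(r)v'(r)\le -\frac{\alpha r}{n(n-1)}$ (multiplying by $v^{m-1}/r$) and integrate on $(0,r)$; since $v^{m-2}v'=\frac{1}{m-1}(v^{m-1})'$ and $m-1<0$, this yields $v^{m-1}(r)\ge \eta^{m-1}+\frac{(1-m)\alpha r^2}{2n(n-1)}\ge\frac{(1-m)\alpha r^2}{2n(n-1)}$, and raising to the power $1/(m-1)$ (which reverses the inequality) and multiplying by $r^2$ gives \eqref{r^2v^{1-m}-bd}. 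The main obstacle is recognising that the discrepancy between $\frac{rv'}{v}$ and $-\frac{\alpha r^2v^{1-m}}{n(n-1)}$ can be expressed as an integral of $v(r)-v(z)$ against a positive weight; once that identity is written down, monotonicity of $v$ and the stated bounds follow from elementary sign considerations and a routine integration.
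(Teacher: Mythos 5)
Your proposal is correct and follows essentially the same route as the paper: your ``identity'' for $F(r)$ is just the integrated equation \eqref{v-integral-eqn} rearranged, and the key steps (establish $v'<0$, use $\int_0^rz^{n-1}v(z)\,dz\ge \frac{r^n}{n}v(r)$ to get $v^{m-2}v'\le-\frac{\alpha}{n(n-1)}r$, then integrate) coincide with the paper's proof. The only difference is that you supply the first-zero contradiction argument for $v'<0$, which the paper asserts in one terse line; that added detail is welcome but does not change the method.
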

\begin{proof}
By \eqref{v-integral-eqn}, $(v^m)'(r)<0$ for all $r>0$. Hence $v'(r)<0$ for all
$r>0$. Then by \eqref{v-integral-eqn},
\begin{align}\label{v'-negative}
&\frac{n-1}{m}r^{n-1}(v^m)'(r)\le-\beta r^nv(r)-(\alpha-n\beta)
\int_0^rz^{n-1}v(r)\,dz
=-\frac{\alpha}{n}r^nv(r)\quad\forall r>0\notag\\
\Rightarrow\quad&v^{m-2}(r)v'(r)\le-\frac{\alpha}{n(n-1)}r
\qquad\qquad\quad\forall r>0
\end{align}
and \eqref{v-v'-bd1} follows. Integrating \eqref{v'-negative} over $(0,r)$ 
and simplifying,
\begin{equation*}
v(r)\le\left(\eta^{m-1}+\frac{\alpha(1-m)}{2n(n-1)}r^2\right)^{-\frac{1}{1-m}}
\le\left(\frac{2n(n-1)}{\alpha(1-m)}r^{-2}\right)^{\frac{1}{1-m}}\quad\forall r>0
\end{equation*}
and \eqref{r^2v^{1-m}-bd} follows.
\end{proof}

\begin{rmk}\label{alpha-beta-range-rmk}
If $n\ge 3$, $0<m\le \frac{n-2}{n}$ and $m\alpha\ge\beta (n-2)$, then
$\alpha\ge n\beta$.
\end{rmk}

\section{Asymptotic behaviour of Yamabe solitons}
\setcounter{equation}{0}
\setcounter{thm}{0}

We will assume from now on that $n\ge 3$, $m=\frac{n-2}{n+2}$, and 
$k=\beta/\alpha$ for any $\alpha\ne 0$ and $\beta\in\R$ for the rest 
of the paper. In this section we will study the asymptotic 
behaviour of locally conformally flat Yamabe solitons with positive
sectional curvature. By the results of \cite{DS2} we can write the metric of
such soliton as $g=v^{\frac{4}{n+2}}dx^2$ where $v$ is a radially
symmetric solution of \eqref{elliptic-eqn} in $\R^n$ with $m=\frac{n-2}{n+2}$ 
for some $\beta\ge 0$ and $\alpha=\frac{2\beta+\rho}{1-m}$ where $\rho>0$, 
$\rho=0$ or $\rho<0$, depending on whether $g$ is a Yamabe shrinking, 
steady or expanding soliton. We will use $R$ to denote the scalar curvature 
of the metric $g$ and let
\begin{equation*}
w(r)=r^2v^{1-m}(r),\quad s=\log r,\quad\4{w}(s)=w(r). 
\end{equation*}
\begin{lem}\label{soliton-bd-lem}
Let $g=v^{\frac{4}{n+2}}dx^2$ be a rotational symmetric Yamabe soliton with 
$v$ satisfying \eqref{elliptic-eqn} with $m=\frac{n-2}{n+2}$ and 
$\alpha=\frac{2\beta+\rho}{1-m}$ for some constant $\rho\in\R$. Then 
the following holds.
\begin{enumerate}
\item[(i)] If $g$ is a Yamabe steady soliton or a Yamabe expanding soliton, 
then $R>0$ if $\alpha>0$.
\item[(ii)] If $g$ is a Yamabe expanding soliton, then $R<0$ if $\alpha
<0<\beta$.
\item[(iii)] If $g$ is a Yamabe shrinking soliton with $\beta>\rho/(n-2)$, 
then $R>\rho$. 
\item[(iv)] If $g$ is a Yamabe shrinking, steady, or expanding soliton 
with $\alpha>0$, then 
\begin{equation}\label{R-bd}
0\le R\le\alpha (1-m).
\end{equation}
\item[(v)] Suppose $g$ is either a Yamabe shrinking soliton with 
$\beta>\rho/(n-2)$ or a Yamabe steady or expanding soliton with 
$\alpha>0$. Then 
\begin{equation}\label{v'-v-lower-bd}
1+\frac{1-m}{2}\frac{rv'(r)}{v(r)}>0\quad\forall r\ge 0
\end{equation}
and $w'(r)>0$ for any $r>0$.
\end{enumerate} 
\end{lem}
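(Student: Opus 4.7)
The strategy is to reduce every assertion to the formula for the scalar curvature together with the first-order quantity $q(r):=rv'(r)/v(r)$. Since $g=v^{4/(n+2)}dx^2$ equals $(v^m)^{4/(n-2)}dx^2$ with $m=(n-2)/(n+2)$, the conformal transformation law gives
\[
R=-\frac{4(n-1)}{n-2}\,\frac{\Delta v^m}{v},
\]
and substituting \eqref{ode} yields $R=(1-m)(\alpha+\beta q)$. Writing $p:=1+\frac{1-m}{2}q$ and using $\alpha(1-m)=2\beta+\rho$, one gets the two key identities
\[
R=\rho+2\beta p,\qquad\frac{w'(r)}{w(r)}=\frac{2p(r)}{r}.
\]
Thus $w'>0\iff p>0\iff R>\rho$ (when $\beta>0$), which fuses the inequality in (v) with assertion (iii) and gives easy access to (i), (ii), (iv).

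For (i) and (ii), note that in every case considered, the hypothesis $m\alpha/\beta\le n-2$ of Lemma \ref{v'-bd-lem} is satisfied (one checks this directly using $\alpha(1-m)=2\beta+\rho$). Lemma \ref{v'-bd-lem} then supplies $1+k\,q>0$ for $k=\beta/\alpha$, whence $R=(1-m)\alpha(1+kq)$ has the same sign as $\alpha$: this is (i) when $\alpha>0$ and (ii) when $\alpha<0<\beta$. For (iv), I will first establish $v'(r)<0$ for all $r>0$ whenever $\alpha>0$ by a standard contradiction at the first zero $r_1>0$ of $v'$: at such a point $(v^m)'(r_1)=0$, so the ODE \eqref{ode} forces $v''(r_1)=-\alpha v^{2-m}(r_1)/(n-1)<0$, contradicting the fact that $v'$ attains a local minimum at $r_1$. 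With $v'\le 0$ and $\beta\ge 0$, the formula $R=(1-m)(\alpha+\beta q)$ gives the upper bound $R\le\alpha(1-m)$; the lower bound $R\ge 0$ then comes from (i) in the steady/expanding cases and from (iii) in the shrinking case.

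The substantive content is (iii)/(v). Differentiating $R=\rho+2\beta p$ along $s=\log r$ (using the identity $(v^m)''+\frac{n-1}{r}(v^m)'=r^{-2}[(v^m)_{ss}+(n-2)(v^m)_s]$ in \eqref{ode}) gives, after simplification in terms of $p$ and $\tilde w$,
\begin{equation*}
p_s=\frac{n-2}{2}(1-p^2)-\frac{\tilde w R}{2(n-1)}.
\end{equation*}
Since $p(0)=1>0$, suppose $s_0$ is the first value where $p(s_0)=0$; then $p_s(s_0)\le 0$ and, because $R(s_0)=\rho+2\beta\cdot 0=\rho$, the identity above forces
\[
0\ge p_s(s_0)=\frac{n-2}{2}-\frac{\tilde w(s_0)\,\rho}{2(n-1)}.
\]
When $\rho\le 0$ (the steady and expanding cases with $\alpha>0$) the right-hand side is strictly positive, yielding the desired contradiction and therefore (v). The shrinking case $\rho>0$ requires the a priori bound $\tilde w(s_0)<(n-1)(n-2)/\rho$; this is the main obstacle. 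I would obtain it by exploiting the scale-invariant (singular) solution $v_*(r)=[(n-1)(n-2)/\rho]^{1/(1-m)}r^{-2/(1-m)}$, which saturates $r^2v^{1-m}\equiv(n-1)(n-2)/\rho$ and satisfies the same equation: either via a direct comparison (using $v(0)=\eta<\infty$ versus the singularity of $v_*$ at $0$) or by a phase-plane analysis of the autonomous second-order ODE
\[
\tilde w_{ss}=\frac{(6-n)\tilde w_s^2}{4\tilde w}+(n-2)\tilde w-\frac{\rho\tilde w^2}{n-1}-\frac{\beta\,\tilde w\,\tilde w_s}{n-1}
\]
which forces any critical point of $\tilde w$ to satisfy $\tilde w_{ss}(s_0)=\tilde w(s_0)[(n-2)-\rho\tilde w(s_0)/(n-1)]$; ruling out a local maximum at $s_0$ then gives the required strict inequality and closes the argument, yielding both (iii) and (v) simultaneously.
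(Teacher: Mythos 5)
Your reduction to the identities $R=(1-m)(\alpha+\beta q)=\rho+2\beta p$ with $p=1+\frac{1-m}{2}q$ and $w'/w=2p/r$ is exactly the paper's starting point, and your handling of (i), (ii) and the upper bound in (iv) matches the paper's proof: both verify \eqref{alpha-beta-relation} and invoke Lemma \ref{v'-bd-lem} to get the sign of $1+kq$, and both get $R\le\alpha(1-m)$ from $v'<0$ together with $\beta\ge 0$. Two small repairs are needed there: your first-zero argument for $v'<0$ must first note that $v'<0$ for small $r>0$ (e.g.\ from \eqref{v-integral-eqn}, which gives $(v^m)'(r)\sim-\frac{m\alpha\eta}{n(n-1)}r<0$ as $r\to0$), and at the first zero $r_1$ the contradiction is between $v''(r_1^-)\ge 0$ and $v''(r_1)=-\frac{\alpha}{n-1}v^{2-m}(r_1)<0$, not a ``local minimum'' of $v'$. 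Also, your lower bound $R\ge0$ in the shrinking case of (iv) appeals to (iii), which assumes $\beta>\rho/(n-2)$ while (iv) assumes only $\alpha>0$; the paper instead quotes $R\ge0$ for shrinking solitons directly from \cite{DS2}.

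The genuine gap is in (iii) and the shrinking case of (v). The paper does not prove (iii): it cites Proposition 4.1 of \cite{DS2}, and then (v) follows in one line from (i), (iii), $\beta>0$ and $R-\rho=2\beta p$. You attempt a self-contained ODE proof instead. Your identity $p_s=\frac{n-2}{2}(1-p^2)-\frac{\tilde w R}{2(n-1)}$ is correct (it is equivalent to \eqref{w-tilde-eqn}) and it does close the steady and expanding cases, where $\rho\le0$ makes $p_s(s_0)>0$ automatic at a first zero $s_0$ of $p$. But for $\rho>0$ everything hinges on the a priori bound $\tilde w(s_0)<(n-1)(n-2)/\rho$, and neither of your proposed routes delivers it. The phase-plane route is circular: where $p=0$ one has $\tilde w_s=0$ and $\tilde w_{ss}=2\tilde w p_s$, so ``ruling out a local maximum of $\tilde w$ at $s_0$'' is literally the same assertion as $p_s(s_0)>0$, i.e.\ the same assertion as $\tilde w(s_0)<(n-1)(n-2)/\rho$; nothing new is obtained. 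The comparison with the singular solution $v_*$ is only gestured at and would require an actual intersection or sliding argument that you do not supply. Two warning signs confirm the gap: the hypothesis $\beta>\rho/(n-2)$ is never used in your sketch, although it must enter somewhere, and you cannot import the bound $r^2v^{1-m}\le(n-1)(n-2)/\rho$ from Lemma \ref{v-infty-growth2}, because that lemma's proof already uses part (v) of the present lemma. As written, (iii) and the shrinking case of (v) are not established.
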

\begin{proof}
(i), (ii) and (iii) is proved in Proposition 4.1 of \cite{DS2}. For the sake of 
completeness we will give a simple different proof of (i) and (ii) here.
As observed in \cite{DS2} since the scalar curvature satisfies (P.184 of
\cite{SY}),
\begin{equation}
R=-\frac{4(n-1)}{n-2}\cdot\frac{\Delta v^m}{v},
\end{equation}
by \eqref{elliptic-eqn},
\begin{equation}\label{R-eqn}
R(r)=(1-m)\left(\alpha +\beta\frac{rv'(r)}{v(r)}\right)
=\alpha(1-m)\left(1+k\frac{rv'(r)}{v(r)}\right).
\end{equation}
Note that for Yamabe steady soliton and Yamabe expanding soliton we have
$\alpha=\frac{2\beta+\rho}{1-m}$ with $\rho=0$ and $\rho<0$ respectively. 
Hence \eqref{alpha-beta-relation} holds for Yamabe steady soliton with 
$\alpha>0$ and for Yamabe expanding soliton with either $\alpha>0$ or 
$\alpha<0<\beta$. Thus by Lemma \ref{v'-bd-lem}, \eqref{basic-monotone-ineqn} 
holds for all $r>0$ in both cases (i) and (ii). By \eqref{basic-monotone-ineqn} 
and \eqref{R-eqn}, (i) and (ii) of the lemma follows.

To prove (iv) we first observe that by the result of \cite{DS2} if 
$g$ is a Yamabe shrinking soliton, then 
$R\ge 0$. This together with (i) imply the first inequality $R\ge 0$
of \eqref{R-bd}. We next observe that under the hypothesis of (iv)
by Lemma \ref{v'-bd-lem}, Lemma \ref{r2v(1-m)+v'-bd-lem} and 
Remark \ref{alpha-beta-range-rmk}, we have $v'(r)<0$ for all $r>0$. Hence
by \eqref{R-eqn} we get \eqref{R-bd}.

Suppose $g$ is either a Yamabe shrinking soliton with 
$\beta>\rho/(n-2)$ or a Yamabe steady or expanding soliton with 
$\alpha>0$. Then by (i), (iii), and \eqref{R-eqn},
\begin{equation*}
\rho+2\beta\left(1+\frac{1-m}{2}\frac{rv'(r)}{v(r)}\right)>\rho
\quad\forall r\ge 0
\end{equation*}
and \eqref{v'-v-lower-bd} follows. Hence
\begin{equation}\label{w'-eqn}
w'(r)=2rv^{1-m}\left(1+\frac{1-m}{2}\frac{rv'(r)}{v(r)}\right)>0\quad\forall
r>0
\end{equation}
and (v) follows.
\end{proof}

\begin{lem}\label{v-infty-growth2}
Let $n\ge 3$, $m=\frac{n-2}{n+2}$, $\eta>0$, $\beta>\frac{\rho}{n-2}>0$, 
$\alpha=\frac{2\beta+\rho}{1-m}$, be such that $n\beta>\alpha$. 
Suppose $v$ is a solution of \eqref{ode}, \eqref{initial-cond}. Then 
\begin{equation}\label{v-decay}
0<r^2v^{1-m}(r)\le\frac{(n-1)(n-2)}{\rho}\quad\forall r>0
\end{equation}
and for any $0<\delta<\frac{\rho}{n(1-m)-2}$ there exists a constant
$R_1>1$ such that 
\begin{equation}\label{vm-v'/v-bd2}
\frac{1}{n-1}\left[\frac{\rho}{n(1-m)-2}-\delta\right]r^2v^{1-m}(r)
+\frac{rv'(r)}{v(r)}\le 0\quad\forall r\ge R_1.
\end{equation}
\end{lem}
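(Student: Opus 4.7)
The plan is to use the monotonicity $w'>0$ from Lemma \ref{soliton-bd-lem}(v) to bound pointwise the integral $I(r):=\int_0^r z^{n-1}v(z)\,dz$ that appears in the first-integral version of the ODE, and then to reduce both \eqref{v-decay} and \eqref{vm-v'/v-bd2} to that integral bound together with a short algebraic identity. Throughout I write $\psi(r)=rv'(r)/v(r)$ and $\lambda_0:=\frac{\rho}{(n-1)(n(1-m)-2)}$, and note that $n(1-m)-2=2(n-2)/(n+2)>0$ for $n\ge 3$, so all denominators below are nonzero.

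First, since $w$ is strictly increasing, for $0<z\le r$ one has $z^2v(z)^{1-m}\le r^2v(r)^{1-m}$, i.e.\ $v(z)\le (r/z)^{2/(1-m)}v(r)$. Since $n-1-2/(1-m)=(n-4)/2>-1$ when $n\ge 3$, integration over $[0,r]$ gives
$$
I(r)\le v(r)\,r^{2/(1-m)}\int_0^r z^{n-1-2/(1-m)}\,dz=\frac{r^nv(r)}{n-2/(1-m)}=\frac{(1-m)\,r^nv(r)}{n(1-m)-2}.
$$
Next, dividing the first-integral identity \eqref{v-integral-eqn} by $r^nv(r)$, and using $(v^m)'=mv^{m-1}v'$ together with $v^{m-1}=r^2/w$, rewrites it as $(n-1)\psi/w=-\beta+(n\beta-\alpha)I/(r^nv)$. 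The hypothesis $n\beta>\alpha$ gives $n\beta-\alpha>0$, so substituting the previous bound on $I$ yields
$$
(n-1)\frac{\psi(r)}{w(r)}\le -\beta+\frac{(n\beta-\alpha)(1-m)}{n(1-m)-2}.
$$
A short algebraic simplification using $\alpha(1-m)=2\beta+\rho$ collapses the right-hand side to $-\rho/(n(1-m)-2)=-(n-1)\lambda_0$, and hence
$$
\lambda_0\,w(r)+\psi(r)\le 0\qquad\forall\,r>0.
$$

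From this single inequality both conclusions of the lemma follow. For \eqref{v-decay}: if $w(r_0)\ge L_0:=(n-1)(n-2)/\rho$ at some $r_0>0$, then $\psi(r_0)\le -\lambda_0 L_0=-2/(1-m)$, contradicting $\psi>-2/(1-m)$ from Lemma \ref{soliton-bd-lem}(v); hence $w(r)<L_0$ for all $r>0$. For \eqref{vm-v'/v-bd2}, any $R_1>1$ works because
$$
\frac{1}{n-1}\left[\frac{\rho}{n(1-m)-2}-\delta\right]w(r)+\psi(r)=\lambda_0 w(r)+\psi(r)-\frac{\delta\,w(r)}{n-1}\le -\frac{\delta\,w(r)}{n-1}<0.
$$
The step requiring the most care is the algebraic identity that turns $-\beta+(n\beta-\alpha)(1-m)/(n(1-m)-2)$ into $-(n-1)\lambda_0$; this identity is what forces the sharp constant $L_0=(n-1)(n-2)/\rho$, and the hypothesis $n\beta>\alpha$ (equivalently $\beta>(n+2)\rho/(2(n-2))$) is precisely what guarantees $n\beta-\alpha>0$ so that the upper bound on $I$ can be substituted into the integral identity in the correct direction.
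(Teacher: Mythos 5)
Your proof is correct, and it takes a genuinely different route from the paper's. The paper establishes the key bound on $\int_0^r z^{n-1}v\,dz$ only asymptotically: it shows $\limsup_{r\to\infty}\int_0^rz^{n-1}v\,dz/(r^nv(r))\le\frac{1-m}{n(1-m)-2}$ via l'Hospital's rule (using $w'>0$ to control $\frac{d}{dr}(r^nv)$ from below), then runs an $\varepsilon$--$\delta$ bookkeeping, integrates the resulting differential inequality over $(R_1,r)$ to bound $w$ for large $r$, extends to all $r>0$ by monotonicity of $w$, and finally lets $\varepsilon\to 0$. You instead convert the same monotonicity $w'>0$ into the pointwise estimate $v(z)\le (r/z)^{2/(1-m)}v(r)$ and integrate it directly, obtaining $\int_0^rz^{n-1}v\,dz\le\frac{(1-m)}{n(1-m)-2}r^nv(r)$ for \emph{every} $r>0$ (the exponent check $(n-4)/2>-1$ and the sign $n\beta-\alpha>0$ are exactly the points that need care, and you handle both). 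Feeding this into \eqref{v-integral-eqn} and using $\alpha(1-m)=2\beta+\rho$ gives the global inequality $\frac{\rho}{(n-1)(n(1-m)-2)}w(r)+\frac{rv'(r)}{v(r)}\le 0$ for all $r>0$, from which \eqref{vm-v'/v-bd2} is immediate (with any $R_1$, and in fact with $\delta=0$), and \eqref{v-decay} follows by playing this against the lower bound $rv'/v>-2/(1-m)$ from \eqref{v'-v-lower-bd} rather than by integrating. Your argument is shorter, avoids l'Hospital and the limsup/$\varepsilon$ passage entirely, and yields a slightly stronger conclusion (the differential inequality holds globally and the bound on $w$ is strict); the paper's version, being asymptotic, is the one that generalizes when only a limsup-type bound on the integral ratio is available. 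The one dependency worth flagging explicitly is that both proofs lean on Lemma \ref{soliton-bd-lem}(v) for $w'>0$ and for the lower bound on $rv'/v$; since that lemma is proved independently of the present one, there is no circularity.
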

\begin{proof}
We first claim that
\begin{equation}\label{term-ratio}
\limsup_{r\to\infty}\frac{\int_0^rz^{n-1}v(z)\,dz}{r^nv(r)}\le\frac{1-m}{n(1-m)-2}.
\end{equation}
By (v) of Lemma \ref{soliton-bd-lem} there exists a constant $C>0$ 
such that $w(r)\ge C$ for any $r>1$. Hence
\begin{align}
&r^nv(r)=r^{n-\frac{2}{1-m}}w^{\frac{1}{1-m}}(r)\ge Cr^{n-\frac{2}{1-m}}\quad\forall
r>1\label{r^n-v-w-eqn}\\
\Rightarrow\quad&r^nv(r)\to\infty\quad\mbox{ as }r\to\infty\label{r^n-v-limit}.
\end{align}
We now divide the claim into two cases.

\noindent{\bf Case 1}: $\int_0^{\infty}z^{n-1}v(z)\,dz<\infty$.

\noindent By \eqref{r^n-v-limit} we get \eqref{term-ratio}.

\noindent{\bf Case 2}: $\int_0^{\infty}z^{n-1}v(z)\,dz=\infty$.

\noindent Since by \eqref{r^n-v-w-eqn} and (v) of Lemma \ref{soliton-bd-lem},
$$
\frac{d}{dr}(r^nv(r))=\left(n-\frac{2}{1-m}\right)r^{n-1}v(r)
+\frac{1}{1-m}r^{n-\frac{2}{1-m}}w^{\frac{m}{1-m}}(r)w'(r)\ge 
\left(n-\frac{2}{1-m}\right)r^{n-1}v(r)\quad\forall r>0,
$$
by the l'Hospital rule,
\begin{align*}
\limsup_{r\to\infty}\frac{\int_0^rz^{n-1}v(z)\,dz}{r^nv(r)}
=&\limsup_{r\to\infty}\frac{r^{n-1}v(r)}{\left(n-\frac{2}{1-m}\right)r^{n-1}v(r)
+\frac{1}{1-m}r^{n-\frac{2}{1-m}}w^{\frac{m}{1-m}}(r)w'(r)}\\
\le&\left(n-\frac{2}{1-m}\right)^{-1}
\end{align*}
and \eqref{term-ratio} follows. 
Let $0<\delta<\frac{\rho}{n(1-m)-2}$ and let $\3>0$ be given by
\begin{equation}\label{epsilon-delta-ineqn}
\frac{2(n-1)}{1-m}\cdot\left[\frac{\rho}{n(1-m)-2}-\delta\right]^{-1}
=\frac{2(n-1)(n(1-m)-2)}{(1-m)\rho}+\3
=\frac{(n-1)(n-2)}{\rho}+\3.
\end{equation}
Then $\3\to 0$ as $\delta\to 0$. By the above claim there exists a 
constant $R_1>1$ such that
\begin{align}\label{term-compare}
&\frac{\int_0^rz^{n-1}v(z)\,dz}{r^nv(r)}<\frac{(1-m)}{n(1-m)-2}
+\frac{\delta}{n\beta-\alpha}
\qquad\qquad\quad\forall r\ge R_1\notag\\
\Rightarrow\quad&\int_0^rz^{n-1}v(z)\,dz\le\left(
\frac{(1-m)}{n(1-m)-2}+\frac{\delta}{n\beta-\alpha}\right)r^nv(r)
\quad\forall r\ge R_1.
\end{align}
By \eqref{v-integral-eqn} and \eqref{term-compare},
\begin{align}\label{v^m'-integral-eqn}
\frac{n-1}{m}r^{n-1}(v^m)'(r)
\le&-\beta r^nv(r)+\left(\frac{(n\beta -\alpha)(1-m)}{n(1-m)-2}+\delta\right)
r^nv(r)\notag\\
\le&-\left(\frac{\rho}{n(1-m)-2}-\delta\right)r^nv(r)
\quad\forall r\ge R_1\notag\\
\Rightarrow\quad (n-1)v^{m-2}v'(r)
\le&-\left(\frac{\rho}{n(1-m)-2}-\delta\right)r\qquad\quad\forall r\ge R_1
\end{align}
and \eqref{vm-v'/v-bd2} follows. Integrating \eqref{v^m'-integral-eqn} over
$(R_1,r)$ and simplifying, by \eqref{epsilon-delta-ineqn},
\begin{align*}
v^{1-m}(r)\le&\left(v(R_1)^{m-1}+\frac{1-m}{2(n-1)}\cdot
\left(\frac{\rho}{n(1-m)-2}-\delta\right)r^2\right)^{-1}\quad\forall r\ge R_1
\notag\\
\le&\left(\frac{(n-1)(n-2)}{\rho}+\3\right)r^{-2}\quad\forall r\ge R_1\\
\Rightarrow\quad r^2v^{1-m}(r)\le&\frac{(n-1)(n-2)}{\rho}+\3\quad\forall 
r\ge R_1.
\end{align*}
Hence by (v) of Lemma \ref{soliton-bd-lem},
\begin{equation}\label{w-epsilon-bd}
w(r)=r^2v^{1-m}(r)\le\frac{(n-1)(n-2)}{\rho}+\3\quad\forall r>0.
\end{equation}
Letting $\3\to 0$ in \eqref{w-epsilon-bd} we get \eqref{v-decay} and the
lemma follows.
\end{proof}

\begin{lem}\label{r-v-limit-lem}
Let $n\ge 3$, $m=\frac{n-2}{n+2}$, $\alpha=\frac{2\beta+\rho}{1-m}$ and 
$\beta>\frac{\rho}{n-2}>0$. Suppose $v$ is a radially symmetric solution 
of \eqref{elliptic-eqn}. Then
$$
a_0=\lim_{r\to\infty}r^2v^{1-m}(r)
$$
exists and $0<a_0<\infty$. 
\end{lem}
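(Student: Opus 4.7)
The plan is to show that $w(r) := r^2 v^{1-m}(r)$ is both monotone increasing (so its limit at infinity exists in $(0,\infty]$) and bounded above (so the limit is finite).

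First I would invoke part (v) of Lemma \ref{soliton-bd-lem}. Since $\beta > \rho/(n-2) > 0$ and $\alpha = (2\beta+\rho)/(1-m)$, we are in the shrinking Yamabe soliton regime with $\beta > \rho/(n-2)$, so the lemma gives $w'(r) > 0$ for every $r > 0$. Hence $w$ is strictly increasing on $(0,\infty)$, and $a_0 := \lim_{r\to\infty} w(r)$ exists in $(0,\infty]$. Positivity is immediate: $a_0 \ge w(r_0) > 0$ for any fixed $r_0 > 0$.

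The remaining task is to show $a_0 < \infty$, and here my plan is to split on the sign of $n\beta - \alpha$. In the case $n\beta > \alpha$, the hypotheses of Lemma \ref{v-infty-growth2} are met verbatim, and it gives the sharp bound $w(r) \le (n-1)(n-2)/\rho$ for every $r > 0$, so $a_0 \le (n-1)(n-2)/\rho < \infty$. In the complementary case $n\beta \le \alpha$, I would work directly from the integral identity \eqref{v-integral-eqn}. Since $n\beta - \alpha \le 0$ and $\int_0^r z^{n-1}v(z)\,dz > 0$, the second term on its right-hand side is nonpositive, so
$$\frac{n-1}{m}\, r^{n-1}(v^m)'(r) \le -\beta\, r^n v(r).$$
Writing $(v^m)' = m v^{m-1} v'$ and dividing by $m v > 0$ gives $v^{m-2}v'(r) \le -\beta r/(n-1)$; since $m < 1$, integrating from $0$ to $r$ yields
$$v^{m-1}(r) \ge \eta^{m-1} + \frac{\beta(1-m)}{2(n-1)}\, r^2,$$
from which $r^2 v^{1-m}(r) \le 2(n-1)/(\beta(1-m))$ for all $r > 0$, so $a_0 \le 2(n-1)/(\beta(1-m)) < \infty$.

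The point I expect to need the most care about is justifying the case split. Substituting $\alpha = (n+2)(2\beta+\rho)/4$ shows $n\beta > \alpha$ is equivalent to $\beta > (n+2)\rho/(2(n-2))$, which is strictly stronger than the assumption $\beta > \rho/(n-2)$ of the lemma; hence Lemma \ref{v-infty-growth2} alone does not suffice. The key observation that closes the argument is that in the ``hard'' regime $n\beta \le \alpha$ the integral term in \eqref{v-integral-eqn} actually carries the favorable sign and can simply be dropped, after which a one-line integration of the resulting differential inequality delivers a uniform upper bound on $w$.
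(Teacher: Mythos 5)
Your proof is correct and follows essentially the same route as the paper: monotonicity of $w=r^2v^{1-m}$ from Lemma \ref{soliton-bd-lem}(v), a case split on the sign of $n\beta-\alpha$, Lemma \ref{v-infty-growth2} when $n\beta>\alpha$, and an elementary integration of \eqref{v-integral-eqn} when $\alpha\ge n\beta$. The only cosmetic difference is that in the latter case the paper simply cites Lemma \ref{r2v(1-m)+v'-bd-lem} (whose proof keeps the integral term and bounds it using $v'<0$, yielding the slightly sharper constant $2n(n-1)/(\alpha(1-m))$), whereas you re-derive a sufficient bound by dropping that term outright.
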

\begin{proof}
By Lemma \ref{r2v(1-m)+v'-bd-lem} \eqref{r^2v^{1-m}-bd} holds if 
$\alpha\ge n\beta$ and by Lemma \ref{v-infty-growth2} \eqref{v-decay} holds 
if $n\beta>\alpha$. Hence by (v) of Lemma \ref{soliton-bd-lem}, 
$r^2v^{1-m}(r)$ converges to some positive number as $r\to\infty$ and the 
lemma follows.
\end{proof}

\begin{lem}\label{v'/v-limit-lem}
Let $g=v^{\frac{4}{n+2}}dx^2$ be a rotationally symmetric metric 
which satisfies \eqref{elliptic-eqn} with $m=\frac{n-2}{n+2}$, 
$\alpha=\frac{2\beta+\rho}{1-m}$, which is either a steady Yamabe soliton
($\rho=0$) with $\alpha>0$, or a Yamabe expanding soliton ($\rho<0$) with
$\alpha>0$, or a Yamabe shrinking soliton ($\rho>0$) with 
$\beta>\frac{\rho}{n-2}$. 
Then
\begin{equation}\label{v'/v-limit}
\lim_{r\to\infty}\frac{rv'(r)}{v(r)}=\left\{\begin{aligned}
&-\frac{2}{1-m}\quad\mbox{ if $g$ is a shrinking or steady Yamabe soliton}\\
&-\frac{1}{k}\qquad\,\,\mbox{ if $g$ is an expanding Yamabe soliton}.
\end{aligned}\right.
\end{equation}
\end{lem}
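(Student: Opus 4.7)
Plan: Set $\phi(r):=rv'(r)/v(r)$ and pass to the logarithmic radius $s=\log r$, so that $r\partial_r=\partial_s$. A direct computation gives $\partial_s\4w=(2+(1-m)\phi)\,\4w$, while substituting $v^{m-1}=r^2/\4w$ into the radial equation \eqref{ode} yields
\[
\partial_s\phi=-m\phi^2-(n-2)\phi-\frac{(\alpha+\beta\phi)\,\4w}{n-1}.
\]
By Lemma \ref{soliton-bd-lem}(v), $\phi>-2/(1-m)$, and combining Lemma \ref{soliton-bd-lem}(iv) with \eqref{R-eqn} and $\beta>0$ gives $\phi\in[-\alpha/\beta,0]$. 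So $\phi$ is uniformly bounded, and on intervals where $\4w$ is bounded so is $\partial_s\phi$.

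\textbf{Shrinking case.} By Lemma \ref{r-v-limit-lem}, $\4w(s)\to a_0\in(0,\infty)$, so $\partial_s\4w/\4w\ge 0$ is integrable on $[s_0,\infty)$ with integral $\log(a_0/\4w(s_0))$. Its $s$-derivative equals $(1-m)\partial_s\phi$, which is bounded, so $\partial_s\4w/\4w$ is uniformly continuous. By Barbalat's lemma (a nonnegative, integrable, uniformly continuous function on a half-line tends to $0$), $\partial_s\4w/\4w\to 0$, which forces $\phi\to -2/(1-m)$.

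\textbf{Steady and expanding cases.} First I show $\4w(s)\to\infty$. If instead $\4w\to W_\infty<\infty$, the same integrability/uniform-continuity argument still gives $\phi\to -2/(1-m)$; passing to the limit in the $\phi$-ODE, continuity forces $\partial_s\phi\to(n-2)/(1-m)-\rho W_\infty/((1-m)(n-1))$, which must vanish since $\phi$ has a finite limit. This gives $W_\infty=(n-1)(n-2)/\rho$, impossible for $\rho\le 0$ with $W_\infty\in(0,\infty)$. Thus $\4w\to\infty$. With $\4w\to\infty$ in hand, I claim $\phi\to-\alpha/\beta=-1/k$. If $\phi$ is eventually monotone, it has some limit $L\in[-\alpha/\beta,0]$; if $L>-\alpha/\beta$ then $\alpha+\beta\phi\ge c>0$ eventually and the forcing term $-(\alpha+\beta\phi)\4w/(n-1)$ drives $\partial_s\phi\to-\infty$, contradicting boundedness of $\phi$, so $L=-\alpha/\beta$. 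Otherwise $\phi$ has infinitely many critical points $s_\ast$ at which $\partial_s\phi(s_\ast)=0$, and the ODE yields
\[
\4w(s_\ast)=-\frac{(n-1)\,\phi(s_\ast)\bigl(m\phi(s_\ast)+(n-2)\bigr)}{\alpha+\beta\phi(s_\ast)},
\]
which remains bounded as long as $\phi(s_\ast)$ stays a definite distance from $-\alpha/\beta$. Since $\4w\to\infty$, every subsequential limit of $\phi$ along critical points must be $-\alpha/\beta$, and because $\phi$ is monotone between consecutive critical points we obtain $\phi(s)\to-\alpha/\beta$. In the steady case $-\alpha/\beta=-2/(1-m)$, so this covers both sub-formulae of \eqref{v'/v-limit}.

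\textbf{Main obstacle.} The delicate part is the steady/expanding regime, where $\4w$ escapes to infinity and the $(\phi,\4w)$-system has no finite equilibrium. The monotonicity/critical-point dichotomy is what controls the resulting non-autonomous dynamics and pins down the asymptotic balance between the bounded term $-\phi(m\phi+(n-2))$ and the large forcing $-(\alpha+\beta\phi)\4w/(n-1)$.
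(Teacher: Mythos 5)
Your argument is correct, and it follows a genuinely different route from the paper. The paper also reduces everything to identifying subsequential limits $a$ of $\phi=rv'/v$, but it does so by a l'H\^opital-type identity that couples $a$ to a \emph{known} limit of an auxiliary quantity: $r^2v^{1-m}\to a_0$ in the shrinking case (Lemma \ref{r-v-limit-lem}), $r^2v^{1-m}/\log r\to a_1$ in the steady case (Theorem 1.3 of \cite{H}), and $r^2v^{2k}\to a_2$ in the expanding case (Theorem 1.6 of \cite{H}); the last two are precise asymptotics imported from outside the present paper. You instead derive the autonomous-looking system $\partial_s\phi=-m\phi^2-(n-2)\phi-(\alpha+\beta\phi)\4{w}/(n-1)$, $\partial_s\4{w}=(2+(1-m)\phi)\4{w}$ (I checked the computation; it is right, and your a priori bounds $\phi\in[-\alpha/\beta,0]$ and $\phi>-2/(1-m)$ follow correctly from Lemma \ref{soliton-bd-lem} and \eqref{R-eqn}), and then argue dynamically: Barbalat's lemma in the shrinking case, and the $\4{w}\to\infty$ plus monotonicity/critical-point dichotomy in the steady and expanding cases. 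What your approach buys is self-containedness — the steady and expanding cases no longer depend on the refined decay rates from \cite{H} — and it also replaces the paper's somewhat delicate subsequential l'H\^opital manipulation (which, as written, equates $\lim_i f(r_i)/g(r_i)$ with $\lim_i f'(r_i)/g'(r_i)$) with a cleaner convergence mechanism. What the paper's route buys is more information en passant: the precise growth of $r^2v^{1-m}$ in each regime, which is used again later (e.g.\ in \eqref{r2v(1-m)-lower-bd}). One presentational point: your dichotomy should be phrased via the zero set of $\partial_s\phi$ being bounded or unbounded (maximal intervals of monotonicity between zeros), since "infinitely many critical points" alone does not preclude eventual monotonicity; this is cosmetic and does not affect the validity of the proof.
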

\begin{proof}
By  Lemma \ref{v'-bd-lem}, Lemma \ref{r2v(1-m)+v'-bd-lem},
Remark \ref{alpha-beta-range-rmk} and Lemma \ref{soliton-bd-lem},
\begin{equation}\label{v'/v-uniform-bd}
-\frac{2}{1-m}\le\frac{rv'(r)}{v(r)}\le 0\quad\forall r>0.
\end{equation}
Let $\{r_i\}_{i=1}^{\infty}$ be a sequence such that $r_i\to\infty$ as 
$i\to\infty$. Then by \eqref{v'/v-uniform-bd} $\{r_i\}_{i=1}^{\infty}$
has a subsequence which we may assume without loss of generality
to be the sequence $\{r_i\}_{i=1}^{\infty}$ itself such that
$\frac{r_iv'(r_i)}{v(r_i)}$ converges to some number $a$ as $i\to\infty$. 
Let $a_0$ be given by Lemma \ref{r-v-limit-lem}.
We now divide the proof into three cases.

\noindent{\bf Case 1}: g is a Yamabe shrinking soliton with 
$\beta>\frac{\rho}{n-2}$.

\noindent Then by the l'Hospital rule,
\begin{align*}
a_0=&\lim_{i\to\infty}r_i^2v^{1-m}(r_i)=\lim_{i\to\infty}\frac{v^{1-m}(r_i)}{r_i^{-2}}
=\lim_{i\to\infty}\frac{(1-m)v^{-m}(r_i)v'(r_i)}{-2r_i^{-3}}\\
=&-\frac{1-m}{2}\lim_{i\to\infty}r_i^2v^{1-m}(r_i)\cdot
\lim_{i\to\infty}\frac{r_iv'(r_i)}{v(r_i)}=-\frac{1-m}{2}a_0a\\
\Rightarrow\quad a=&-\frac{2}{1-m}.
\end{align*}
Since the sequence $\{r_i\}_{i=1}^{\infty}$ is arbitrary, we get 
\begin{equation}\label{v'-limit-a}
\lim_{r\to\infty}\frac{rv'(r)}{v(r)}=-\frac{2}{1-m}.
\end{equation}

\noindent{\bf Case 2}: g is a Yamabe steady soliton with $\alpha>0$.

\noindent By Theorem 1.3 of \cite{H},
$$
\lim_{r\to\infty}\frac{r^2v^{1-m}(r)}{\log r}=a_1
$$
where $a_1=2(n-1)(n-2-mn)/[\beta(1-m)]$. Then by the l'Hospital rule,
\begin{align*}
a_1=&\lim_{i\to\infty}\frac{v^{1-m}(r_i)}{r_i^{-2}\log r_i}
=\lim_{i\to\infty}\frac{(1-m)v^{-m}(r_i)v'(r_i)}{-2r_i^{-3}\log r_i+r_i^{-3}}
=-\frac{1-m}{2}\lim_{i\to\infty}\frac{r_i^2v^{1-m}(r_i)}{\log r_i}\cdot
\lim_{i\to\infty}\frac{r_iv'(r_i)}{v(r_i)}\\
=&-\frac{1-m}{2}a_1a.
\end{align*}
Hence
\begin{equation*}
a=-\frac{2}{1-m}.
\end{equation*}
Since the sequence $\{r_i\}_{i=1}^{\infty}$ is arbitrary, we get 
\eqref{v'-limit-a}.

\noindent{\bf Case 3}: g is a Yamabe expanding soliton with $\alpha>0$.

\noindent By Theorem 1.6 of \cite{H} (cf. Theorem 3.2 of \cite{V1}), 
$$
\lim_{r\to\infty}r^2v^{2k}(r)=a_2
$$
for some constant $0<a_2<\infty$. Then by the l'Hospital rule,
\begin{equation*}
a_2=\lim_{i\to\infty}\frac{v^{2k}(r_i)}{r_i^{-2}}
=\lim_{i\to\infty}\frac{2kv^{2k-1}(r_i)v'(r_i)}{-2r_i^{-3}}
=-k\lim_{i\to\infty}r_i^2v^{2k}(r_i)\cdot
\lim_{i\to\infty}\frac{r_iv'(r_i)}{v(r_i)}
=-ka_2a.
\end{equation*}
Hence
\begin{equation*}
a=-\frac{1}{k}.
\end{equation*}
Since the sequence $\{r_i\}_{i=1}^{\infty}$ is arbitrary, we get 
\begin{equation*}\label{v'-limit-b}
\lim_{r\to\infty}\frac{rv'(r)}{v(r)}=-\frac{1}{k}.
\end{equation*}
By case 1, case 2 and case 3, the lemma follows.
\end{proof}

We are now ready for the proof of Theorem \ref{u-R-limit-thm}.

\noindent {\ni{\it Proof of Theorem \ref{u-R-limit-thm}}:} 
Suppose first \eqref{rho-beta-cond} holds. Then by \eqref{R-eqn}
and Lemma \ref{v'/v-limit-lem},
\begin{equation}\label{R-eqn2}
R=\rho+2\beta\left(1+\frac{1-m}{2}\frac{rv'(r)}{v(r)}\right)\to\rho\quad
\mbox{ as }\quad r\to\infty
\end{equation}
and (i) of Theorem \ref{u-R-limit-thm} follows. If $\rho<0$ and 
$\alpha>0$, then by \eqref{R-eqn} and Lemma \ref{v'/v-limit-lem}
we get (ii) of Theorem \ref{u-R-limit-thm}. 

We next assume that $\beta>\frac{\rho}{n-2}>0$. Let $a_0$ be as in 
Lemma \ref{r-v-limit-lem}. Then by Lemma \ref{r2v(1-m)+v'-bd-lem}, 
Lemma \ref{v-infty-growth2}, \eqref{w'-eqn} and \eqref{R-eqn2},
\begin{equation}\label{w_s-limit}
\4{w}_s(s)=rw'(r)=\frac{\4{w}(s)}{\beta}(R-\rho)\to 0\quad\mbox{ as }
\quad s=\log r\to\infty.
\end{equation}
Let $\{r_i\}_{i=1}^{\infty}$ be such that $r_i\to\infty$ as $i\to\infty$
and $s_i=e^{r_i}$. As proved in \cite{DS2} and \cite{H} $\4{w}$ satisfies
\begin{equation}\label{w-tilde-eqn}
\4{w}_{ss}=\frac{1-2m}{1-m}\cdot\frac{\4{w}_s^2}{\4{w}}
-\frac{\beta}{n-1}\4{w}\4{w}_s-\frac{\rho}{n-1}\4{w}^2
+\frac{2(n-2-nm)}{1-m}\4{w}
\end{equation}
in $(-\infty,\infty)$. Hence by Lemma \ref{r-v-limit-lem}, \eqref{w_s-limit}
and \eqref{w-tilde-eqn},
$$
a_3=\lim_{s\to\infty}\4{w}_{ss}(s)
=-\frac{a_0^2\rho}{n-1}+\frac{2(n-2-nm)}{1-m}a_0
$$
exists. Suppose $a_3\ne 0$. Without loss of generality we may assume that
$a_3>0$. Then 
$$
\lim_{s\to\infty}\4{w}_s(s)=\infty
$$ 
which contradicts \eqref{w_s-limit}. Hence $a_3=0$. Thus
\begin{align*}
\frac{a_0^2\rho}{n-1}=\frac{2(n-2-nm)}{1-m}a_0\quad\Rightarrow\quad
a_0=\frac{2(n-1)(n-2-nm)}{(1-m)\rho}=\frac{(n-1)(n-2)}{\rho}
\end{align*}
and \eqref{v-limit} follows. By \eqref{v-limit} and (v) of 
Theorem \ref{soliton-bd-lem} we get \eqref{r-v-upper-lower-bd} and 
the theorem follows.

{\hfill$\square$\vspace{6pt}}

\section{Positivity and asymptotic behaviour of the sectional 
curvature of Yamabe solitons}
\setcounter{equation}{0}
\setcounter{thm}{0}

In this section we will give a simple proof on the positivity of 
the sectional curvature of rotational symmetric Yamabe solitons of the form 
$g=v^{\frac{4}{n+2}}dx^2$ in $\R^n$, $n\ge 3$, where $v$ satisfies 
\eqref{elliptic-eqn} with 
$m=\frac{n-2}{n+2}$. We also find the exact value of the sectional curvature 
of such Yamabe solitons at the origin and at infinity. We first prove the
following improvement of Corollary 4.2 of \cite{DS2}.

\begin{lem}(cf. Corollary 4.2 of \cite{DS2})\label{R'<0}
Let $g=v^{\frac{4}{n+2}}dx^2$ be a rotationally symmetric Yamabe soliton
in $\R^n$, $n\ge 3$, such that $v$ satisfies \eqref{elliptic-eqn} with 
$m=\frac{n-2}{n+2}$ and $\alpha=\frac{2\beta+\rho}{1-m}$.
Suppose $\beta>\frac{\rho}{n-2}>0$ if $g$ is a Yamabe shrinking soliton,
and $\alpha>0$ if $g$ is a Yamabe steady or expanding soliton. Then 
the scalar curvature $R(r)$ is a strictly decreasing function of $r>0$
and $R'(r)<0$ for all $r>0$.
\end{lem}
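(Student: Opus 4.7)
The strategy is to pass to the logarithmic derivative $\phi(r):=rv'(r)/v(r)$, derive a first-order ODE satisfied by $\phi$, and then rule out critical points of $\phi$ by a maximum-principle argument. Since \eqref{R-eqn} reads $R=(1-m)(\alpha+\beta\phi)$, and the standing hypotheses force $\beta>0$ and $\alpha>0$ in each of the three cases (check: for a shrinker $\beta>\rho/(n-2)>0$, for a steady soliton $\alpha>0$ and $\rho=0$ give $\beta=\alpha(1-m)/2>0$, and for an expander $\alpha>0$ and $\rho<0$ give $2\beta=\alpha(1-m)-\rho>0$), one gets $R'(r)=\beta(1-m)\phi'(r)$, so it suffices to show $\phi'(r)<0$ for every $r>0$.

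The first task is to rewrite \eqref{ode} as an equation for $\phi$. From $\phi=rv'/v$ one computes $r^2v''/v=r\phi'+\phi^2-\phi$, and substituting $\Delta v^m=-mRv/[(n-1)(1-m)]$ (which is just \eqref{elliptic-eqn} solved for $\Delta v^m$ and rewritten via \eqref{R-eqn}) I expect to obtain the clean identity
\begin{equation*}
r\phi'(r)+m\phi(r)^2+(n-2)\phi(r)+\frac{w(r)R(r)}{(n-1)(1-m)}=0\quad\forall r>0,
\end{equation*}
where $w(r)=r^2v^{1-m}(r)$ as in Section 3. A Taylor expansion of \eqref{ode} at the origin, using $v'(0)=0$, gives $v''(0)=-\alpha\eta^{2-m}/[n(n-1)]$ and hence $\phi(r)\sim -\alpha\eta^{1-m}r^2/[n(n-1)]$, so in particular $\phi'(r)<0$ on some initial interval $(0,\delta)$.

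Suppose for contradiction that $\phi'$ has a zero in $(0,\infty)$, and let $r_0>0$ be the infimum of this zero set. Then $\phi'(r)<0$ on $(0,r_0)$ and $\phi'(r_0)=0$, which forces $\phi''(r_0)\ge 0$. Differentiating the displayed ODE and evaluating at $r_0$ kills every term containing $\phi'$, leaving
\begin{equation*}
r_0\phi''(r_0)=-\frac{(wR)'(r_0)}{(n-1)(1-m)}.
\end{equation*}
Because $R'(r_0)=\beta(1-m)\phi'(r_0)=0$, the right-hand side reduces to $-w'(r_0)R(r_0)/[(n-1)(1-m)]$. By Lemma \ref{soliton-bd-lem}(v) one has $w'(r_0)>0$, and by parts (i) and (iii) of the same lemma one has $R(r_0)>0$ in all three cases, so $\phi''(r_0)<0$, contradicting $\phi''(r_0)\ge 0$. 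Therefore $\phi'(r)<0$ for every $r>0$, and the lemma follows.

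The only mildly tedious step I anticipate is verifying the first-order ODE for $\phi$ from \eqref{ode}; once that identity is in hand, the rest is essentially a one-line argument, because differentiating the ODE at a critical point of $\phi$ eliminates every nonlinear term simultaneously and the surviving expression is unambiguously signed by Lemma \ref{soliton-bd-lem}. A route through the second-order equation \eqref{w-tilde-eqn} for $\tilde w$ also seems feasible but heavier, since the $\phi'$-contribution does not decouple as cleanly there.
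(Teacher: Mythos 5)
Your proof is correct, but it takes a genuinely different route from the paper's. The paper never works with $rv'/v$ directly: it quotes from \cite{C} and \cite{DS2} the elliptic equation $(n-1)\Delta R+\beta(x\cdot\nabla R)v^{1-m}+R(R-\rho)v^{1-m}=0$ satisfied by the scalar curvature, writes it as a radial ODE, multiplies by the integrating factor $r^{n-1}e^{\frac{\beta}{n-1}\int_0^r\tau v(\tau)^{1-m}\,d\tau}$ and integrates, obtaining the explicit representation \eqref{R-integral} of $R'(r)$ as a negative multiple of $\int_0^rz^{n-1}v^{1-m}R(R-\rho)e^{\frac{\beta}{n-1}\int_0^z\tau v^{1-m}d\tau}\,dz$; the sign then drops out because $R(R-\rho)>0$ by parts (i) and (iii) of Lemma \ref{soliton-bd-lem}. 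Your argument instead stays entirely at the level of \eqref{ode}: the Riccati-type identity $r\phi'+m\phi^2+(n-2)\phi+wR/[(n-1)(1-m)]=0$ for $\phi=rv'/v$ is correct (dividing \eqref{ode} by $v^m/r^2$ and using $r^2v''/v=r\phi'+\phi^2-\phi$ and \eqref{R-eqn} gives exactly this), the expansion $\phi\sim-\alpha\eta^{1-m}r^2/[n(n-1)]$ near the origin is right and uses $\alpha>0$, which does hold in all three cases, and the first-critical-point argument is sound: differentiating the identity at a zero of $\phi'$ leaves only $-w'R/[(n-1)(1-m)]$, strictly negative by parts (v) and (i), (iii) of Lemma \ref{soliton-bd-lem}, and there is no circularity since part (v) is proved independently of this lemma. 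Your reduction $R'=\beta(1-m)\phi'$ silently needs $\beta>0$, which you correctly verify in each case. What your route buys is self-containedness --- no appeal to the equation for $R$ from \cite{C}; what it gives up is the integral formula \eqref{R-integral} itself, which the paper reuses later in the proof of Theorem \ref{positivity-thm} to compute $K_0(0)$ and $\lim_{r\to\infty}K_0(r)$, so on your route that formula would still have to be derived there separately.
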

\begin{proof}
As proved in \cite{C} and \cite{DS2}, $R$ satisfies
$$
(n-1)\Delta R+\beta (x\cdot\nabla R)v^{1-m}+R(R-\rho)v^{1-m}=0
\quad\mbox{ in }\R^n
$$
where $\Delta$, $\nabla$, are the laplacian and gradient with respect to the
Euclidean metric in $\R^n$. Hence
\begin{align}\label{R-integral}
&R''(r)+\frac{n-1}{r}R'(r)+\frac{\beta}{n-1}v(r)^{1-m}
rR'(r)=-\frac{1}{n-1}v(r)^{1-m}R(r)(R(r)-\rho)\qquad\qquad\,\,\,\,
\forall r>0\notag\\
\Rightarrow\quad&\frac{d}{dr}\left(r^{n-1}
e^{\frac{\beta}{n-1}\int_0^r\tau v(\tau)^{1-m}\,d\tau}R'(r)\right)
=-\frac{r^{n-1}}{n-1}v(r)^{1-m}R(r)(R(r)-\rho)
e^{\frac{\beta}{n-1}\int_0^r\tau v(\tau)^{1-m}\,d\tau}\quad\forall r>0\notag\\
\Rightarrow\quad&R'(r)
=-\frac{\int_0^rz^{n-1}v(z)^{1-m}R(z)(R(z)-\rho)
e^{\frac{\beta}{n-1}\int_0^z\tau v(\tau)^{1-m}\,d\tau}\,dz}
{(n-1)r^{n-1}e^{\frac{\beta}{n-1}\int_0^r\tau v(\tau)^{1-m}\,d\tau}}\qquad\qquad\qquad\quad
\forall r>0.
\end{align}
By (i) and (iii) of Lemma \ref{soliton-bd-lem} the term $R(r)(R(r)-\rho)$
is always positive. Hence $R'(r)<0$ for all $r>0$ and the lemma follows.
\end{proof}

\noindent We are now ready for the proof of Theorem \ref{positivity-thm}:

\noindent{\ni {\it Proof of Theorem \ref{positivity-thm}}}:
Positivity of the sectional curvature of Yamabe soliton is proved in 
\cite{DS2}. Since the proof in \cite{DS2} is hard, we will give a simple
proof of this result in this paper. Similar to \cite{DS2}
we let
$$
\tilde{s}=\int_{-\infty}^s\4{w}(\tau)^{\frac{1}{2}}\,d\tau\quad\mbox{ and }\quad
\psi (\tilde{s})=\4{w}(s)^{\frac{1}{2}}\quad\forall s>-\infty.
$$
Then 
\begin{equation}\label{w-psi-diff-relation}
(\log\4{w}(s))_s=2\psi_{\tilde{s}},\qquad \psi_{\tilde{s}\tilde{s}}
=\frac{(\log\4{w}(s))_{ss}}{2\4{w}(s)^{\frac{1}{2}}}
\end{equation}
and $K_0$, $K_1$, are given by (P.25 of \cite{DS2}),
\begin{equation}\label{K0-K1-defn}
K_0=-\frac{\psi_{\tilde{s}\tilde{s}}}{\psi},\qquad 
K_1=\frac{1-\psi_{\tilde{s}}^2}{\psi^2}.
\end{equation}
By \eqref{w_s-limit} and Lemma \ref{R'<0},
\begin{equation}\label{Rs<0}
\beta(\log\4{w}(s))_{ss}=R_s=rR_r<0\quad\forall s>-\infty.
\end{equation}
By \eqref{w-psi-diff-relation}, \eqref{K0-K1-defn} and \eqref{Rs<0}, we get
$$
K_0>0\quad\forall r>0
$$
and
\begin{equation}\label{K0-eqn}
K_0=-\frac{R_s}{2\beta\4{w}}=-\frac{R_r}{2\beta rv^{1-m}(r)}.
\end{equation}
Let $Q(r)=v(r)^{1-m}R(r)(R(r)-\rho)/(n-1)$. Then by 
By \eqref{R-eqn}, \eqref{R-integral} and \eqref{K0-eqn},
\begin{align}\label{K0(0)}
K_0(0)=&\lim_{r\to 0}
\frac{\int_0^rz^{n-1}Q(z)e^{\frac{\beta}{n-1}\int_0^z\tau v(\tau)^{1-m}\,d\tau}\,dz}
{2\beta r^nv^{1-m}(r)e^{\frac{\beta}{n-1}\int_0^r\tau v(\tau)^{1-m}\,d\tau}}\notag\\
=&\frac{v^{m-1}(0)}{2\beta}\cdot\lim_{r\to 0}
\frac{\int_0^rz^{n-1}Q(z)e^{\frac{\beta}{n-1}\int_0^z\tau v(\tau)^{1-m}\,d\tau}\,dz}{r^n}
\notag\\
=&\frac{v^{m-1}(0)}{2\beta}\cdot\lim_{r\to 0}
\frac{r^{n-1}Q(r)e^{\frac{\beta}{n-1}\int_0^r\tau v(\tau)^{1-m}\,d\tau}}{nr^{n-1}}
\notag\\
=&\frac{R(0)(R(0)-\rho)}{2\beta n (n-1)}\notag\\
=&\frac{2\beta+\rho}{n(n-1)}.
\end{align}
Hence $K_0(r)>0$ for all $r\ge 0$. We will now show that $K_1$ is strictly
positive on $\R^n$. We first claim that $\psi_{\tilde{s}}(0)=1$. This result
is stated without proof in \cite{AK} and \cite{DS2}. For the sake of
completeness we will give a short simple proof here. By direct computation,
\begin{align}
&\psi_{\tilde{s}}=1+\frac{1-m}{2}\cdot\frac{rv_r(r)}{v(r)}\label{psi-eqn}\\
\Rightarrow\quad&\psi_{\tilde{s}}(0)=1\notag
\end{align}
and the claim follows. We next observe that by \eqref{w-psi-diff-relation}
and \eqref{Rs<0}, $\psi_{\tilde{s}\tilde{s}}<0$. Hence 
\begin{equation}\label{psi-1st-derviative-upper-bd}
\psi_{\tilde{s}}(\tilde{s})<\psi_{\tilde{s}}(0)=1\quad\forall \tilde{s}>0.
\end{equation}
By \eqref{psi-eqn} and (v) of Lemma \ref{soliton-bd-lem},
\begin{equation}\label{psi-1st-derviative-lower-bd}
\psi_{\tilde{s}}(\tilde{s})>0\quad\forall \tilde{s}\ge 0.
\end{equation}
By \eqref{K0-K1-defn}, \eqref{psi-1st-derviative-upper-bd} and 
\eqref{psi-1st-derviative-lower-bd}, 
$$
K_1(r)>0\quad\forall r>0.
$$
By \eqref{K0-K1-defn} and the l'Hospital rule,
\begin{equation}\label{K1(0)}
K_1(0)=\lim_{\tilde{s}\to 0}\frac{1-\psi_{\tilde{s}}^2}{\psi^2}
=-\lim_{\tilde{s}\to 0}\frac{\psi_{\tilde{s}\tilde{s}}}{\psi}
=K_0(0).
\end{equation}
By \eqref{K0(0)} and \eqref{K1(0)}, we get \eqref{K(0)} and $K_1>0$ for all
$r\ge 0$. We will now prove \eqref{K0-limit}. By (iii) of Theorem
\ref{u-R-limit-thm} and the result of \cite{H} there exists a constant 
$C>0$ such that
\begin{equation}\label{r2v(1-m)-lower-bd}
r^2v^{1-m}\ge\left\{\begin{aligned}
&C\qquad\quad\,\forall r\ge 2\quad\mbox{ if $g$ is a Yamabe shrinking 
soliton with $\beta>\frac{\rho}{n-2}$}\\
&C\log r\quad\forall r\ge 2
\quad\mbox{ if $g$ is a Yamabe steady soliton with $\alpha>0$}\\
&Cr^{|\rho|/\beta}\qquad\,\forall r\ge 2\quad
\mbox{ if $g$ is a Yamabe expanding soliton with $\alpha>0$}.
\end{aligned}\right.
\end{equation}
Hence we always have 
\begin{equation}\label{r^nv-limit}
r^nv^{1-m}(r)\to\infty\quad\mbox{ as }r\to\infty. 
\end{equation}
We now divide the proof of \eqref{K0-limit} into two cases.

\noindent{\bf Case 1}: 
$\int_0^{\infty}z^{n-1}Q(z)e^{\frac{\beta}{n-1}\int_0^z\tau v(\tau)^{1-m}\,d\tau}\,dz
<\infty$.

\noindent Then by \eqref{r^nv-limit},
\begin{equation}
\lim_{r\to\infty}K_0(r)=\lim_{r\to\infty}
\frac{\int_0^rz^{n-1}Q(z)e^{\frac{\beta}{n-1}\int_0^z\tau v(\tau)^{1-m}\,d\tau}\,dz}
{2\beta r^nv^{1-m}(r)e^{\frac{\beta}{n-1}\int_0^r\tau v(\tau)^{1-m}\,d\tau}}=0.
\end{equation}

\noindent{\bf Case 2}: 
$\int_0^{\infty}z^{n-1}Q(z)e^{\frac{\beta}{n-1}\int_0^z\tau v(\tau)^{1-m}\,d\tau}\,dz
=\infty$.

\noindent Then by \eqref{r^nv-limit} and the l'Hospital rule,
\begin{equation}\label{K0-infty-eqn}
\lim_{r\to\infty}K_0(r)=\lim_{r\to\infty}
\frac{\int_0^rz^{n-1}Q(z)e^{\frac{\beta}{n-1}\int_0^z\tau v(\tau)^{1-m}\,d\tau}\,dz}
{2\beta r^nv(r)^{1-m}e^{\frac{\beta}{n-1}\int_0^r\tau v(\tau)^{1-m}\,d\tau}}
=\frac{1}{2\beta}\lim_{r\to\infty}
\frac{r^{n-1}Q(r)}{E}
\end{equation}
where
\begin{equation}\label{E-eqn}
E=(r^2v(r)^{1-m})_rr^{n-2}+r^2v(r)^{1-m}\left[(n-2)r^{n-3}+r^{n-2}\cdot
\frac{\beta}{n-1}rv(r)^{1-m}\right].
\end{equation}
Since by (v) of Lemma \ref{soliton-bd-lem} and Lemma 3.1 of \cite{H},
$(r^2v^{1-m}(r))_r>0$ for all $r>0$, by \eqref{E-eqn},
\begin{equation}\label{E-lower-bd}
E\ge r^{n-1}v(r)^{1-m}\left((n-2)+\frac{\beta}{n-1}r^2v(r)^{1-m}\right).
\end{equation}
By \eqref{K0-infty-eqn} and \eqref{E-lower-bd},
\begin{align}\label{K0-infty-upper-bd}
0\le\lim_{r\to\infty}K_0(r)\le&\frac{1}{2\beta}\lim_{r\to\infty}
\frac{Q(r)}{v^{1-m}(r)(n-2+(\beta/(n-1))r^2v(r)^{1-m})}\notag\\
=&\frac{1}{2\beta(n-1)}\lim_{r\to\infty}
\frac{R(r)(R(r)-\rho)}{n-2+(\beta/(n-1))r^2v(r)^{1-m}}\notag\\
\le&\frac{1}{2\beta^2}\lim_{r\to\infty}
\frac{R(r)(R(r)-\rho)}{r^2v(r)^{1-m}}.
\end{align}
By Theorem \ref{u-R-limit-thm}, (iv) of Lemma \ref{soliton-bd-lem} and 
\eqref{r2v(1-m)-lower-bd}, the right hand side of \eqref{K0-infty-upper-bd} 
tends to zero as $r\to\infty$ and \eqref{K0-limit} follows. 
We will now prove \eqref{K1-limit}. We first 
observe that by \eqref{K0-K1-defn},
\begin{equation}\label{K1-upper-bd}
0\le K_1(r)\le\frac{1}{r^2v(r)^{1-m}}\quad\forall r>0.
\end{equation}
If $g$ is a Yamabe steady or expanding soliton with $\alpha>0$, then
by \eqref{r2v(1-m)-lower-bd} the right hand side of \eqref{K1-upper-bd}
tends to zero as $r\to\infty$ and hence
\begin{equation*}
\lim_{r\to\infty}K_1(r)=0.
\end{equation*}
Suppose now $g$ is a Yamabe shrinking soliton with $\beta>\frac{\rho}{n-2}$. 
Then by \eqref{w_s-limit}, \eqref{w-psi-diff-relation} and 
Theorem \ref{u-R-limit-thm},
\begin{equation}\label{psi-limit}
\psi_{\tilde{s}}(\tilde{s})=\frac{R-\rho}{2\beta}\to 0\quad\mbox{ as }
\,s=\log r\to\infty.
\end{equation} 
By \eqref{K0-K1-defn}, \eqref{psi-limit} and Theorem \ref{u-R-limit-thm},
\begin{equation*}
\lim_{r\to\infty}K_1(r)=\frac{\rho}{(n-1)(n-2)}
\end{equation*}
and the theorem follows.

\hfill$\square$\vspace{6pt}


\begin{thebibliography}{99}

\bibitem[AK]{AK} S.~Angenent and D.~Knopf, {\em An example of neckpinching 
for Ricci flow on $S^{n+1}$}, Math. Res. Lett. 11 (2004),no. 4, 493--518.

\bibitem[C]{C} B.~Chow, {\em The Yamabe flow on locally conformally flat
manifolds with positive Ricci curvature}, Comm. Pure Appl. Math. 45 (1992),
1003--1014.

\bibitem[CMM]{CMM} G.~Catino, C.~Mantegazza, and L.~Mazzieri, {\em On the
global structure of conformal gradient solitons with nonnegative Ricci 
tensor}, http://arxiv.org/abs/1109.0243. 

\bibitem[DS1]{DS1} P.~Daskalopoulos and N.~Sesum, {\em On the extinction
profile of solutions to fast diffusion}, J. Reine Angew Math. 622 (2008),
95--119.

\bibitem[DS2]{DS2} P.~Daskalopoulos and N.~Sesum, {\em The classification of
locally conformally flat Yamabe solitons}, http://arxiv.org/abs/1104.2242.

\bibitem[GP]{GP} B.H.~Gilding and L.A.~Peletier, {\em On a class of similarity
solutions of the porous media equation}, J. Math. Analy. Appl. 55
(1976), 351--364.

\bibitem[H]{H} S.Y.~Hsu, {\em Singular limit and exact decay rate 
of a nonlinear elliptic equation}, Nonlinear Analysis TMA 75 (2012), no. 7,
3443--3455.

\bibitem[PS]{PS} M.~Del Pino and M.~S\'aez, {\em On the extinction profile 
for solutions of $u_t=\Delta u^{(N-2)/(N+2)}$}, Indiana Univ. Math. J. 50
(2001), no. 1, 611--628.

\bibitem[SY]{SY} R.~Schoen and S.T.~Yau, {\em Lectures on differential 
geometry}, Cambridge, MA, USA, International Press, 1994.

\bibitem[V1]{V1} J.L.~Vazquez, {\em Smoothing and decay estimates for nonlinear
diffusion equations}, Oxford Lecture Series in Mathematics and its Applications
33, Oxford University Press, Oxford, 2006.

\bibitem[V2]{V2} J.L.~Vazquez, {\em The porous medium equation-Mathematical 
Theory}, Oxford Mathematical Monographs, Oxford University Press, 2007.

\end{thebibliography}
\end{document}